\newcommand{\CM}{Cohen-Macaulay}
\newcommand{\ff}{\text{if and only if}}
\newcommand{\wrt}{with respect to}
\newcommand{\n}{\mathfrak{n} }
\newcommand{\m}{\mathfrak{m} }
\newcommand{\Ass}{\operatorname{Ass}}
\newcommand{\grade}{\operatorname{grade}}
\newcommand{\depth}{\operatorname{depth}}
\newcommand{\htt}{\operatorname{ht}}
\newcommand{\Spec}{\operatorname{Spec}}
\newcommand{\rank}{\operatorname{rank}}
\newcommand{\projdim}{\operatorname{projdim}}
\newcommand{\injdim}{\operatorname{injdim}}
\newcommand{\Hom}{\operatorname{Hom}}
\newcommand{\ext}{\operatorname{Ext}}
\newcommand{\Tor}{\operatorname{Tor}}
\newcommand{\syz}{\operatorname{Syz}}
\newcommand{\curv}{\operatorname{curv}}
\newcommand{\cx}{\operatorname{cx}}
\theoremstyle{plain}
\newtheorem{theorem}{Theorem}[section]
\newtheorem{corollary}[theorem]{Corollary}
\newtheorem{lemma}[theorem]{Lemma}
\newtheorem{proposition}[theorem]{Proposition}
\theoremstyle{definition}
\theoremstyle{remark}
    \def\MR#1{}
\begin{document}

\title{Bass and Betti Numbers of $A/I^n$ }

\author{Ganesh~S.~Kadu}
\address{Department of Mathematics, Savitribai Phule Pune University, Pune 411 007, India}
\email{ganeshkadu@gmail.com}
\author{Tony~J.~Puthenpurakal}
\address{Department of Mathematics, IIT Bombay, Powai, Mumbai 400 076, India}
\email{tputhen@math.iitb.ac.in}
\thanks{The author thanks  DST-SERB for financial support.}
\subjclass{Primary 13D02 ; Secondary 13D40 } 
\keywords{Betti Numbers, Bass numbers, Hilbert functions.}

\date{\today}

\begin{abstract}
 Let $(A, \m, k)$ be a Gorenstein local ring of dimension $ d\geq 1.$  Let $I$ be an ideal of $A$ with $\htt(I) \geq d-1.$
 We prove that the numerical function
 \[ n \mapsto
\ell(\ext_A^i(k, A/I^{n+1}))\]
 is given by a polynomial of degree $d-1 $ in the  case when
   $ i \geq d+1 $ and $\curv(I^n) > 1$ for all $n \geq 1.$
 We prove a similar result for the numerical function
 \[ n \mapsto \ell(\Tor_i^A(k,
A/I^{n+1}))\] under the assumption that $A$ is a \CM ~ local ring.
 \noindent We note that there are many examples of ideals satisfying the condition $\curv(I^n) > 1,$  for all $  n \geq 1.$
 We also consider more general functions $n \mapsto \ell(\Tor_i^A(M,
 A/I_n)$ for a filtration $\{I_n \}$ of ideals in $A.$ We prove similar results in the case when $M$ is a maximal \CM ~ $A$-module and   $\{I_n=\overline{I^n} \}$ is the integral closure filtration, $I$ an $\m$-primary ideal in $A.$
\end{abstract}
\maketitle
\section{\bf Introduction}
 Let $(A, \m, k)$ be a Noetherian local ring of dimension $d \geq 1.$ It is well known that for a finitely generated $A$-module $M$ the numerical function
   \[ n \mapsto \ell(\ext^i_A(k, M/I^{n+1}M))\]
   is given by a polynomial of degree at most $d-1$  for $ i \geq 1,$ see \cite[Theorem 2]{Kod}. We denote this polynomial by $\varepsilon^i_{M,I}(x).$
   When $M=A$ we simply denote this by
   $\varepsilon^i_{I}(x).$ Note that $\ell(\ext^i_A(k,
   M/I^{n+1}M))$ is the $i^{th}$ Bass number of $M/I^{n+1}M.$ Dually we have the
   numerical function
 \[ n \mapsto \ell(\Tor_i^A(k, M/I^{n+1}M))\]
giving the $i^{th}$ Betti numbers of $M/I^{n+1}M.$ Again by a
theorem of Kodiyalam \cite[Theorem 2]{Kod}, this is given by a
polynomial of degree at most $d-1.$ We denote this polynomial by
$t_{I, i}^A(M,x).$ These polynomials are collectively called the
Hilbert polynomials associated to derived functors because they
generalize the usual Hilbert polynomial. It is of some
interest to find the degrees of the polynomials
$\varepsilon^i_{M,I}(x)$ and $t_{I, i}^A(M,x).$ For instance in
\cite[Theorem 18]{Pu1} it was proved that if $M$ is a maximal \CM \
$A$-module and $I = \m$ then
 \[
 \deg t_{\m , 1}^A(M,x) < d -1 \quad \text{\ff} \quad M \ \text{is free}.
 \]
 In \cite[Theorem I]{Pu-Srikanth} this result was generalized to
arbitrary finitely generated modules with projective dimension at
least $1$ over \CM ~ local rings. In \cite{Ganesh} it was proved
that for an ideal $I$ of analytic deviation one \[ \deg t_{I,
1}^A(M, x) < d-1 ~ \text{then} ~ F_I(M) ~ \text{is free} ~
F(I)\text{-module}\] where $F(I)= \bigoplus_{n\geq 0}I^n/\m I^n$
is the fiber cone of $I$ and
 $F_I(M) = \bigoplus_{n\geq 0}I^nM/\m I^nM$ is the fiber module of $M$ \wrt \ $I.$
 Katz and Theodorescu in \cite{KTheo} prove that if $I$ integrally closed  then the degree of $t_{I, i}^A(k,x)$ is equal to analytic spread of $I$
  minus one under some mild conditions on ring $A.$
 In the case of Hilbert polynomials associated to extension
 functor see \cite{KTheo3}, \cite{KTheo2}, \cite{Theo}   giving estimates on the degree of the polynomials in some cases of interest.

 In this paper we provide  a new class of ideals, namely ideals $I$ satisfying the condition $\curv(I^n) >1$ for all $n \geq 1$ ( see 2.2 for definition of curvature) for which the the
 numerical functions giving the $i^{th}$ Bass numbers and $i^{th}$ Betti numbers of
 $A/I^n$ are polynomials of degree equal to $d-1$ for $n >>0.$ It follows from \cite[Corollary 5]{LA2} that there many examples
 of ideals with $\curv(I^n) >1$ for all $n \geq 1,$ we note this in 2.4 and 2.5 of section on preliminaries. Our results in the case
 of Bass numbers require the Gorenstein hypothesis on ring $A$
 while in the case of Betti numbers we require $A$ to be \CM. More
 precisely our results state:

   \begin{theorem}\label{mainth1}
    Let $(A, \m, k)$ be a Gorenstein local ring of dimension $ d\geq 1.$  Let $I$ be an ideal of $A$ with $\htt(I) \geq d-1$ and  $\curv(I^n) > 1$ for
     all $n \geq 1.$  Let $x_1, \ldots, x_{d-1}$ be a superficial sequence w.r.t. $I$ and $A_l(\underline x)= A/(x_1, \ldots, x_{d-l})$ for $1 \leq l \leq d.$
      Then for $ i \geq d+1,$ we have $\deg \varepsilon^i_{A_l, I}(x) =l-1.$ In particular when $l=d,$ we get $\deg \varepsilon^i_{I}(x)=d-1.$
     \end{theorem}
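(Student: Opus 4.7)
The plan is to induct on $l$, with base case $l = 1$. The upper bound $\deg \varepsilon^i_{A_l, I}(x) \leq l - 1$ is a Kodiyalam-type observation: $\bigoplus_n \ext^i_A(k, A_l/I^{n+1}A_l)$ is finitely generated over the Rees algebra of $IA_l$, which has dimension $l + 1$. The content of the theorem lies in the matching lower bound.

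For the inductive step $l - 1 \to l$ (with $l \geq 2$), note that since $A$ is Cohen-Macaulay and the superficial elements lie in $I$, they form a regular $A$-sequence. In particular, $x := x_{d-l+1}$ is $A_l$-regular, and superficiality gives $(I^{n+1}A_l : x) = I^n A_l$ for $n \gg 0$, yielding
\begin{equation*}
0 \to A_l/I^n A_l \xrightarrow{\cdot x} A_l/I^{n+1}A_l \to A_{l-1}/I^{n+1}A_{l-1} \to 0.
\end{equation*}
Applying $\Hom_A(k, -)$ and passing to lengths in the resulting long exact sequence yields
$\varepsilon^i_{A_l, I}(n) - \varepsilon^i_{A_l, I}(n-1) = \varepsilon^i_{A_{l-1}, I}(n) - \ell(\image \delta_0) - \ell(\image \delta_1)$,
where $\delta_0, \delta_1$ are the two adjacent connecting homomorphisms. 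The inductive hypothesis gives $\deg \varepsilon^i_{A_{l-1}, I} = l - 2$ with positive leading coefficient, and showing that the boundary contributions cannot cancel the leading term then yields $\deg \varepsilon^i_{A_l, I} = l - 1$.

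For the base case, $A_1 = A/(x_1, \ldots, x_{d-1})$ is a $1$-dimensional Gorenstein local ring, presented as a complete-intersection quotient of $A$. For an $A_1$-module $M$, the minimal $A$-resolution of $M$ is the total complex of the Koszul complex $K(x_1, \ldots, x_{d-1}; A)$ tensored with a minimal $A_1$-resolution of $M$, giving the change-of-rings formula
\begin{equation*}
\ell(\ext^i_A(k, M)) = \sum_{p + q = i} \binom{d-1}{p}\, \ell(\ext^q_{A_1}(k, M)).
\end{equation*}
Taking $M = A_1/I^{n+1}A_1$ and $i \geq d + 1$, only terms with $q \geq 2$ appear. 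Each such $\ell(\ext^q_{A_1}(k, M))$ is eventually constant in $n$ (Kodiyalam in dimension $1$); positivity of at least one follows from the curvature hypothesis. Indeed, $\curv_A(A/I^n) > 1$ transfers to $\curv_{A_1}(A_1/I^n A_1) > 1$ via the Poincar\'e series factorization $P^A_N(t) = (1+t)^{d-1} P^{A_1}_N(t)$ available for this complete-intersection extension, so the projective dimension of $A_1/I^{n+1}A_1$ over $A_1$ is infinite; the equality of Bass and Betti complexity over the Gorenstein ring $A_1$ then forces non-vanishing of $\ext^q_{A_1}(k, A_1/I^{n+1}A_1)$ for suitable $q \geq 2$.

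The main obstacle is the no-cancellation step in the inductive long exact sequence: both $\ell(\image \delta_j)$ are a priori of the same polynomial order in $n$ as $\varepsilon^i_{A_{l-1}, I}$, so the survival of the leading coefficient likely requires a structural comparison, either via Gorenstein duality relating Bass numbers of adjacent indices, or via a support analysis of $\bigoplus_n \ext^i_A(k, A_l/I^{n+1}A_l)$ as a finitely generated Rees-algebra module.
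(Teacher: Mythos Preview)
Your inductive framework matches the paper's, but the proposal has a genuine gap at exactly the point you flag as the ``main obstacle'': you do not resolve the no-cancellation problem for the connecting homomorphisms. The paper does resolve it, and the mechanism is precisely the thing you gesture at in your last sentence without carrying out.

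The key observation (Lemma~\ref{fingen} in the paper) is that since $A$ is Gorenstein and $\projdim_A A_{l+1} < \infty$, one has $\injdim_A A_{l+1} < \infty$; hence $\ext^i_A(k, A_{l+1}[t]) = 0$ for $i \geq d+1$, and the short exact sequence $0 \to \mathcal{R}(I, A_{l+1}) \to A_{l+1}[t] \to L^I(A_{l+1})(-1) \to 0$ yields
\[
\ext^i_A(k, L^I(A_{l+1}))(-1) \cong \ext^{i+1}_A(k, \mathcal{R}(I, A_{l+1}))
\]
for $i \geq d+1$. The right side is finitely generated over $\mathcal{R}(I)$, so $\ext^s_A(k, L^I(A_{l+1}))$ is a finitely generated graded $\mathcal{R}(I)$-module for $s = i, i+1$. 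Now one \emph{chooses} $y \in I$ so that $yt$ is filter regular on both of these modules and $y$ is $I$-superficial for $A_{l+1}$. Filter regularity means exactly that the multiplication-by-$y$ maps
\[
\ext^s_A\bigl(k, A_{l+1}/I^n A_{l+1}\bigr) \xrightarrow{\ \mu_y\ } \ext^s_A\bigl(k, A_{l+1}/I^{n+1}A_{l+1}\bigr), \qquad s = i, i+1,
\]
are injective for $n \gg 0$. This forces both connecting maps in your long exact sequence to vanish, giving the clean identity
\[
\varepsilon^i_{A_l(\underline{y}), I}(n+1) = \varepsilon^i_{A_{l+1}, I}(n+1) - \varepsilon^i_{A_{l+1}, I}(n)
\]
and hence $\deg \varepsilon^i_{A_{l+1}, I} = l$ from the inductive hypothesis. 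Note that this requires re-choosing the superficial element at each step (and extending $x_1,\ldots,x_{d-l-1},y$ to a new maximal superficial sequence $\underline{y}$); you cannot simply use the originally given $x_{d-l+1}$, since there is no reason it should be filter regular on the relevant Ext modules. This is also why the restriction $i \geq d+1$ appears: it is what makes the finite-generation lemma go through.

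Your base case argument is different from the paper's and somewhat shakier. The change-of-rings formula you write is the Betti-number (Tor) formula coming from the Koszul factorization of the Poincar\'e series; the analogous statement for Bass numbers requires a separate argument. The paper instead argues directly (Lemmas~\ref{extnonzero} and~\ref{degzero}) that $\projdim_A A_1/I^n A_1 = \infty$ using the curvature hypothesis and standard complexity/curvature inequalities under reduction by a regular sequence, and then invokes the Gorenstein property and the Bass characterization of injective dimension to conclude $\ext^i_A(k, A_1/I^n A_1) \neq 0$ for all $i \geq 1$.
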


\begin{theorem} \label{mainth2}  Let $(A, \m)$ be a \CM ~  ring of dimension $d \geq1.$ Let $I$ be an ideal with $\htt(I) \geq d-1$ and $\curv(I^n) >1 $ for all $ n \geq 1. $
 Then for $i \geq d-1,$
    \[ n \mapsto   \ell ( \Tor_i^A(k, A/I^n)) \]
    is given by a polynomial  $t^A_{I,i}(k, z)$ of degree $d-1$ for $ n >> 0.$
\end{theorem}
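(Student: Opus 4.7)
The plan is to induct on $d = \dim A$, paralleling the superficial-sequence strategy of Theorem~\ref{mainth1}: cut by a non-zerodivisor superficial element, prove the base case in dimension one, and lift back up.

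For the base case $d = 1$, Kodiyalam's theorem already forces $\deg t^A_{I,i}(k, z) \leq 0$, so the task reduces to excluding the zero polynomial. Here the curvature hypothesis delivers precisely what is needed: $\curv(I^n) > 1$ forces $\projdim_A(A/I^n) = \infty$ for every $n \geq 1$, so in the minimal free resolution of $A/I^n$ no Betti number can vanish (a single vanishing Betti number would, by minimality and Nakayama, terminate the resolution). Hence $\ell(\Tor_i^A(k, A/I^n)) \geq 1$ for every $i \geq 0$ and every $n \geq 1$, and the polynomial is a positive constant of degree $0$.

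For the inductive step $d \geq 2$, choose $x_1 \in I$ that is simultaneously a non-zerodivisor on $A$ and a superficial element for $I$; this is possible because $A$ is CM and $\htt(I) \geq d - 1 \geq 1$. Put $B = A/(x_1)$ and $J = IB$, so $B$ is CM of dimension $d - 1$ with $\htt(J) \geq d - 2$. A preliminary task is to transfer the curvature hypothesis: using the change of rings isomorphism $\Tor_i^A(k, N) \cong \Tor_i^B(k, N) \oplus \Tor_{i-1}^B(k, N)$ valid for $B$-modules $N$, together with the short exact sequence
\[
0 \to A/I^{n-1} \xrightarrow{x_1} A/I^n \to B/J^n \to 0
\]
(valid for $n \gg 0$ by Artin--Rees, since $I^n : x_1 = I^{n-1}$ eventually), one deduces $\curv_B(B/J^n) > 1$ for $n \gg 0$. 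The inductive hypothesis then applies to $(B, J)$, giving that $g_j(n) := \beta_j^B(B/J^n)$ is a polynomial of degree exactly $d - 2$ for $j \geq d - 2$ and $n \gg 0$.

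The main obstacle is to then read off the degree of $f_i(n) := \beta_i^A(A/I^n)$. The long exact sequence of $\Tor_*^A(k, -)$ applied to the displayed short exact sequence, combined with change of rings, yields the identity
\[
g_i(n) + g_{i-1}(n) \;=\; f_i(n) + f_{i-1}(n - 1) - r_i(n) - r_{i-1}(n),
\]
where $r_i(n)$ is the length of the image of the multiplication-by-$x_1$ map $\Tor_i^A(k, A/I^{n-1}) \to \Tor_i^A(k, A/I^n)$. These correction terms do not in general vanish, which is what rules out a naive splitting argument; indeed, dropping them would contradict the theorem. The technical heart of the proof will be controlling $r_i(n)$ polynomially in $n$: the graded sum $\bigoplus_n \Tor_i^A(k, A/I^{n+1})$ carries a natural finitely generated graded module structure over the fiber cone $F(I) = \bigoplus_n I^n/\m I^n$, and the maps defining the $r_i(n)$ are precisely multiplication by the class of $x_1$ in $F(I)_1$. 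Extracting the polynomial behavior of $r_i(n)$ from this graded structure, and combining it with Kodiyalam's bound $\deg f_i \leq d - 1$ and the non-vanishing $f_i(n) \geq 1$ supplied by the curvature hypothesis, pins down $\deg f_i = d - 1$ for $i \geq d - 1$.
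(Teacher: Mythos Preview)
Your strategy is close to the paper's --- induction via a superficial element, the short exact sequence $0 \to A/I^{n-1} \to A/I^n \to (\text{quotient}) \to 0$, and recognition that $\bigoplus_n \Tor_i^A(k, A/I^{n+1})$ is a finitely generated graded module --- but the final step, as written, does not close.

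The ingredients you list at the end (polynomial behavior of $r_i(n)$, Kodiyalam's bound $\deg f_i \leq d-1$, and the nonvanishing $f_i(n) \geq 1$) do not force $\deg f_i = d-1$. Take $d=2$: your identity
\[
g_i(n) + g_{i-1}(n) \;=\; \bigl(f_i(n) - r_i(n)\bigr) + \bigl(f_{i-1}(n-1) - r_{i-1}(n)\bigr),
\]
with left side a positive constant $c$, is perfectly consistent with all $f_j$ being constants $a_j \geq 1$ and $r_i(n)+r_{i-1}(n)=a_i+a_{i-1}-c$ constant. Nothing in your stated constraints excludes this, so the concluding sentence is a genuine gap, not just a suppressed detail.

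What the paper does is sharper. Having shown (Lemma~\ref{fingentor}) that $\Tor_s^A\bigl(k, L^I(A_{l+1})\bigr)$ is a finitely generated graded $\mathcal{R}(I)$-module for the relevant $s$, it chooses the superficial element $y$ so that $yt \in \mathcal{R}(I)_1$ is also \emph{filter regular} on $\Tor_s^A\bigl(k, L^I(A_{l+1})\bigr)$ for $s=i$ and $s=i-1$. Filter regularity means exactly that multiplication by $yt$ is injective in all large degrees, so the maps you call $\phi_i$ and $\phi_{i-1}$ are injective for $n \gg 0$. The long exact sequence then collapses to a genuine short exact sequence, giving the first-difference identity
\[
t^{A_l}_{I,i}(k,n+1) \;=\; t^{A_{l+1}}_{I,i}(k,n+1) - t^{A_{l+1}}_{I,i}(k,n),
\]
from which the degree increment is immediate. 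Your argument can be repaired the same way: impose filter regularity when choosing $x_1$ (you already have the finite-generation input that guarantees such an element exists). A secondary structural difference: the paper inducts on the number $l$ of superficial elements killed in the \emph{second} $\Tor$ argument, always computing $\Tor^A$, rather than changing the base ring and inducting on $\dim A$. This avoids your curvature-transfer step and the change-of-rings splitting, though your route through $\Tor^B$ would also go through once the filter-regularity gap is closed.
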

We then consider more general functions for $i \geq 1$ 
\vspace{-3.78 mm}
\[ n \mapsto  \ell(\Tor_i^A(M, A/I_n)) \]
where $\mathcal J = \{I_n\}$ is an $I$-admissible filtration of ideals in $A.$ Here we assume that $A$ is \CM ~ of dimension $ d  \geq 1$ and that $M$ is
a non-free maximal \CM ~ $A$-module. Our main results show that
these functions are given by polynomials of degree equal to $d-1$
for the following cases of filtrations of ideals $ \{ I_n\}$:
\begin{enumerate}
  \item [\rm (i)]  $\mathcal J = \{I_n =I^n\}$ is an $I$-adic filtration where $I$ is $\m$-primary
  with $r(I)=1$ i.e. ideals having reduction number one and $ \Tor_i^A(M,
  A/I )\neq 0.$
  \item [\rm (ii)] $\mathcal J = \{I_n =\overline {I^n}\}$ is the
  integral closure filtration of ideals in $A$ where $A$ is analytically unramified
  ring.
 \end{enumerate}
  Moreover we also identify the normalized leading coefficient in
 the case (i) above. In case (ii) we first prove the result in
 dimension one by using the fact that for integral closed $\m$-primary
 ideal quotient $I,$ ring
 $A/I$ acts as a test module for finite projective dimension, see
 \cite[Corollary 3.3]{HunCorso}. We then prove the result by
 induction on dimension $d$ by using Lemma \ref{Pu}.\\

  Here is an overview of the contents of the paper.
   In section 2  on preliminaries we give all the basic definitions,
  notations and also discuss some preliminary facts that we need.
  In section 3 we estimate the degree of the polynomial $\varepsilon^i_{I}(x)$ in the case when  $A$ is Gorenstein local ring of dimension one.
   In section 4 we proceed to estimate the degree of
  $\varepsilon^i_{I}(x)$ in the general case of Gorenstein local ring of arbitrary
  dimension. This proves one of our main theorems \ref{mainth1}  stated above. In section 5 we prove theorem \ref{mainth2} showing that
   the degree of the polynomial $t^A_{I,i}(k,
 z)$ giving the Betti numbers of $A/I^n$ is $d-1$ in the case of
 interest. In section 6 we consider more general Hilbert
 polynomials associated to derived functors of torsion functor. In this section we provide some conditions on ideal $I$ and module $M$ under which
  the associated Hilbert polynomials have
 degree exactly $d-1.$ Finally in section 7 we consider the case
 of integral closure filtration when the ring $A$ is analytically
 unramified \CM.  We prove in theorem \ref{mainth}  that, in this case again, the degree of the associated Hilbert polynomial attains the upper bound of  $d-1.$

\section{\bf Preliminaries}
Throughout  this section we assume that that $(A, \m)$ is a Noetherian local ring  with  residue field $k=A/\m $ and $M$ a finitely generated $A$-module.
 The $n^{th}$ Betti number of $A$-module $M$ is denoted by  $\beta_n^A(M)$ while the $n^{th}$ Bass number of $M$ is denoted by $\mu_A^n(M).$ We first define
 the notions of complexity and  curvature of modules. We also mention some of their basic properties needed in this paper. For detailed proofs and other
 additional information, see \cite[Section 4]{LA}.\\

\noindent {\bf 2.1.} The  complexity of a finitely generated $A$-module $M$ is defined by

\[ \cx M = \inf \bigg\{ d \in \mathbb N \Bigm|
\begin{array}{l}
\text{ there exists polynomial} ~ p(n) ~\text{of degree } d-1   \\
\text{such that} ~ \beta_n^A(M) \leq p(n) \text{ for all} ~ n >> 0.
\end{array}     \bigg\}
\]

\noindent {\bf 2.2.} The curvature of $A$-module $M$ is defined as
\[ \curv M = \limsup_{n \rightarrow \infty} \sqrt[n]{\beta_n^A(M)} \]

\noindent {\bf 2.3.} We state some of the basic properties of complexity and curvature  for finitely generated modules, see \cite[Section 4]{LA} for
detailed proofs.
\begin{itemize}
\item[(1)] $\projdim_A M < \infty \iff  \cx_A M =0 \iff \curv_A M =0. $
\item[(2)] $\projdim_A M = \infty \iff \cx_A M \geq 1 \iff \curv_A M \geq 1 .$
\item[(3)] $ \cx_A M \leq 1 \iff M \text{ has bounded Betti numbers }.$
\item[(4)] $ \cx_A M  < \infty  \implies  \curv_A M \leq 1.$
\item[(5)] $\curv_A M < \infty.$
\item[(6)] $\cx_A M \leq \cx_A k$ and $\curv_A M \leq \curv_A k.$
\item[(7)]  For all $ n\geq 1, ~ \cx_A M = \cx_A \syz_n^A(M) $ and $\curv_A M = \curv_A \syz_n^A(M). $
\item[(8)] Let $\underline x$ be an $A$-regular sequence of length $r$ in $A.$ Set  $A' = A/(\underline x)$ and $M'= M/(\underline x) M.$
Then we have  $\cx_A M \leq \cx_{A'} M' \leq \cx_A M +r; $
  and if in addition $ \projdim_A M= \infty, $ then ~$\curv_{A'}M' = \curv_A M.$
\end{itemize}

\noindent {\bf 2.4.} Let $(A, \m, k)$ be a Noetherian local ring
of dimension $ d\geq 1$ that is not a complete intersection. Let
$J$ be any ideal with $\htt(J) > 0 $ and  for $i \geq 1$ let $I =
\m^iJ.$ Since $A$ is not complete intersection so by \cite[Theorem
3]{LA2},
 $\curv k > 1.$ Now by  \cite[Corollary 5]{LA2}, we obtain $\curv (I^n) > 1$ for all $n \geq 1.$
 Thus there are many examples of ideals satisfying the condition $\curv(I^n) > 1$ for all $n \geq
 1.$\\

\noindent {\bf 2.5.} Let $A$ be a Noetherian local ring. Let $P$
be a prime ideal in $A$ such that $A_P$ is not a complete
intersection and $\dim A_P \geq 1.$ By  2.4 above we have
$\curv_{A_P}(P^nA_P) > 1$ for all $n.$ As $ \curv(P^n) \geq
\curv_{A_P}(P^nA_P) $ we have $\curv(P^n) >1$ for all $n.$\\

\noindent {\bf 2.6.}    We denote by $\mathcal R (I) =
\displaystyle \bigoplus_ {n \geq 0}I^n$ the Rees algebra of ideal
$I.$  For a  finitely generated $A$-module $M$ we denote by
$\mathcal R (I, M) = \displaystyle \bigoplus_ {n \geq 0}I^nM$ the
Rees module of $I$ with respect to $M.$
 Note that $\mathcal R(I, M)$ is finitely generated graded $\mathcal R(I)$-module. The graded $k$-algebra $\mathcal F(I)= \mathcal R (I) \otimes A/\m$ is
 known as the fiber cone of the ideal $I.$ The Krull dimension of $\mathcal F(I)$ is known as the analytic spread of $I,$ denoted by $l(I).$ \\

\noindent {\bf 2.7.} Set $L^I(M) = \displaystyle \bigoplus_{n \geq
0} M / I^{n+1}M.$ Consider the following short exact sequence
\[ 0 \rightarrow \mathcal R(I, M) \rightarrow M[t] \rightarrow L^I(M)(-1)\rightarrow 0.\]
This gives  $L^I(M)(-1) $ and consequently $L^I(M), $ the structure of a graded $\mathcal R(I)$-module. We note that $L^I(M)$ is not a finitely generated
$\mathcal R (I)$-module. This module was introduced in \cite{Tony-Ratliff}.\\
\noindent {\bf 2.8.} Let $I$ be an ideal in $A$ and $M$ be an
$A$-module. An element $x \in I$ is said to be $I$-superficial for
$M$ if there exists a positive integer $c$  with \[ (I^{n+1}M :_M
x) \cap I^cM = I^nM ~~ \text{for all} ~~ n\geq c.\] A sequence of
elements $\underline x = x_1, x_2, ...,x_n$ is said to be
$I$-superficial if $x_1 $ is $I$-superficial for $A$ and for $ i
\geq 2,$ $x_i$ is
$I$-superficial for $ A/(x_1, ...,x_{i-1}).$\\

\noindent {\bf 2.9.} We now recall the notion of filter regular
sequence and mention a condition which guarantees their existence.
Let $  R = \displaystyle \bigoplus_{n \geq 0} R_n$ be a standard
graded ring and $ M = \displaystyle \bigoplus_{n \geq 0} M_n$ be a
graded $R$-module. An element $ a \in R_1$ is called a filter
regular element if $ \mu_a : M_n \rightarrow M_{n+1}$ given by
$\mu_a(x) = ax$ is injective for $n >> 0.$
 A sequence of elements $ \underline a = a_1, a_2, \ldots,  a_s$ in $R_1$   is called a filter regular sequence w.r.t. $M$ if $a_1$ is filter regular and
 for $ i \geq 2,$  $ a_i$ is filter regular w.r.t. $M/ (a_1, \ldots a_{i-1})M.$ Finally we mention that filter regular sequence w.r.t. $M$ exists  if the residue field $R_0/\m_0$  of $R_0$ is infinite, see  \cite[18.3.10]{BS} for a proof of this result and  for  the other general facts about filter regular sequences.\\
\noindent {\bf 2.10.} Let $ (A, \m)$ be a local ring and $I$ be an
$\m-$primary ideal. Then a sequence of ideals $ \mathcal I = \{
I_n\}_{ n \in \mathbb Z} $ is called an $I$-admissible filtration
if for all $m ,n \in \mathbb Z$ we have
\begin{itemize}
\item[(1)] $I_{n+1} \subseteq I_n$
\item[(2)] $I_n  I_m \subseteq I_{n+m}$
\item[(3)] $I^n \subseteq I_n$
\item[(4)] there exists $ k \in \mathbb N$ such that $I_n \subseteq I^{n-k}$ for $ n \geq k.$

\end{itemize}
\noindent {\bf 2.11.} Let $\overline I$ denote integral closure of
ideal $I.$ If $A$ is analytically unramified then the filtration
$\mathcal I = \{ \overline{ I^n}\}$ is an $I$-admissible filtration of ideals by a theorem of D. Rees \cite{Rees}.\\
\noindent {\bf 2.12.} Let $x \in I $ and Set $B=A/xA $, $J=I/(x)$,
$J_n = I_nB$ and $ \overline {\mathcal I} = \{ J_n \}.$
If $ \mathcal I = \{I_n\}$ is $I$-admissible filtration then $\overline {\mathcal I}$ is $J$-admissible filtration of ideals in $B.$\\
\noindent {\bf 2.13.} The notion of superficial element can be
generalized to modules and for any arbitrary filtration of ideals.
Let $I$ be an ideal in $A$ and let $M$ be an $A$-module. Let
$\mathcal I= \{I_n\}$ be an $I$-admissible filtration of ideals in
$A.$ We say that $ x\in I_1 \setminus I_2$ is $\mathcal
I$-superficial for $M$ if there exists $ c \in \mathbb N$ such
that $ (I_{n+1}M :_M x ) \cap I_cM = I_nM$ for all $n \geq c.$ We
note the following,
\begin{itemize}
\item[(1)] If $\depth M >0$ then it is easy to see that every $\mathcal I-$superficial element for $M$ is also $M$-regular. See \cite[Section 2]{Huck-Mar}  for the case M = A.
\item[(2)] If $x $ is $\mathcal I$-superficial with respect to $M$ and $M$-regular, then by using the Artin-Rees lemma for $M$ and $xM$ one gets $(I_nM :_M x) = I_{n-1}M$ for all $ n >> 0.$ See \cite[p. 8]{Sally} for the case M = A.
\item[(3)] Let $x \in I_1$ be $M$-regular element. Then $x$ is a superficial
element of $I_1$ with respect to $M$ if and only if $I_{n+1}M: x= I_nM$ for $ n >> 0.$ See \cite[8.5.3]{Hun-Swan}.
\end{itemize}

\noindent {\bf Convention:} The degree of the zero polynomial is defined to be $- \infty .$

\section{\bf One Dimensional Case}
We now study the growth of $\varepsilon^i_{I}(x)$ in the case when $(A, \m)$ is a Gorenstein local ring of $d=1$ and $I$ is an ideal of analytic spread $l(I) = 1.$ We first need the following lemma.
\begin{lemma}\label{principal} Let $(A, \m)$ be \CM ~ local ring of dimension $1.$
    Let $I$ be an $\m$-primary
    ideal. If $ \mu(I^n) =1$ for some $n \geq 1$ then $I$ is a
    principal ideal.
\end{lemma}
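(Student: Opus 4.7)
The plan is to bootstrap from $I^n$ principal to $I$ principal by combining two ingredients: multiplication by a nonzerodivisor preserves minimal numbers of generators, and the Hilbert function of the fiber cone is eventually constant. First, $\mu(I^n) = 1$ means $I^n = (a)$ for some $a \in A$. Because $I$ is $\m$-primary in the one-dimensional \CM \ ring $A$, the ideal $I^n$ contains a nonzerodivisor, so the generator $a$ must itself be a nonzerodivisor.

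The crucial identity is $I^{n+m} = I^n \cdot I^m = a \cdot I^m$ for every $m \geq 1$. Since multiplication by $a$ gives an $A$-linear isomorphism $I^m \cong a I^m = I^{n+m}$, one obtains
\[ \mu(I^m) = \mu(I^{n+m}) \quad \text{for all } m \geq 1, \]
so the numerical function $m \mapsto \mu(I^m)$ on positive integers is periodic with period $n$.

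On the other hand, since $I$ is $\m$-primary in a local ring of dimension one, its analytic spread satisfies $l(I) = 1$, so the fiber cone $F(I) = \bigoplus_m I^m/\m I^m$ is a standard graded $k$-algebra of Krull dimension one. Consequently $\mu(I^m) = \dim_k F(I)_m$ is eventually equal to the multiplicity $e$ of $F(I)$. Iterating the identity with $m = kn$ yields $1 = \mu(I^n) = \mu(I^{2n}) = \cdots$, and comparing with the eventual value forces $e = 1$. Applying the periodicity with $m = 1$ then gives $\mu(I) = \mu(I^{1+kn}) = e = 1$ for $k \gg 0$, i.e.\ $I$ is principal. The only nontrivial input is the identification $l(I) = \dim A$ ensuring eventual constancy of $\mu(I^m)$; beyond this, the argument is pure bookkeeping with minimal numbers of generators, and I do not foresee any real obstacle.
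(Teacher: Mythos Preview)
Your proof is correct and takes a genuinely different route from the paper's. The paper first reduces to an infinite residue field, chooses an $I$-superficial element $x$ generating a minimal reduction $J=(x)$, uses Noether normalization of the fiber cone to identify $I^n=(x^n)$, and then descends one step at a time: multiplication by $x$ gives an injection $I^{n-1}/\m I^{n-1}\hookrightarrow I^n/\m I^n$ (since $x$ is regular and $\m I^n=\m(x^n)$), whence $\mu(I^{n-1})\leq\mu(I^n)=1$, and one iterates down to $\mu(I)=1$. Your argument instead exploits global structure: the single generator $a$ of $I^n$ yields a period-$n$ symmetry $\mu(I^m)=\mu(I^{m+n})$, which combined with the eventual constancy of the Hilbert function of the one-dimensional fiber cone forces $\mu(I^m)=1$ for every $m\geq 1$. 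Your approach has the small bonus of not invoking any superficial element and hence not needing the infinite residue field reduction; the paper's stepwise descent is more hands-on and produces the intermediate inequalities $\mu(I^{j-1})\leq\mu(I^j)$ explicitly along the way.
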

\begin{proof} We may assume that the residue field of $A$ is infinite. Let $J=(x)$ be a minimal reduction generated by an $I$-superficial element. By graded Noether normalization we know that $F(J)$ is a standard homogeneous $k-$subalgebra of $F(I).$ Note that $F(J)_n= <\bar x^n> \subseteq F(I)_n. $ As $ \mu(I^n) =\dim F(I)_n=1$ and $\bar x^n \neq 0$ so we obtain $F(I)_n= < \bar x^n>.$  So $I^n = (x^n) + \m I^n$ and hence by Nakayama lemma $I^n
    = (x^n).$ Now we claim that the following $k$-linear map is
    injective,
    \[ \frac{I^{n-1}}{\m I^{n-1}} \xrightarrow[\hspace{13pt}]{\mu_x}  \frac{I^n}{\m I^n}.\]
    To do this let $ \bar a \in \frac{I^{n-1}}{\m I^{n-1}}$ and
    suppose $\overline{xa} =0 $ i.e. $xa \in \m I^n = \m (x^n).$ Since
    $x$ is $A$-regular so $a \in \m (x^{n-1}).$ Hence $ a \in \m
    I^{n-1}$ i.e. $\bar a =0.$ Thus $\mu_x$ is injective. This shows
    that $ \mu(I^{n-1}) \leq \mu(I^n) =1.$ Iterating this we get
    $\mu(I) =1.$ So $I$ is a principal ideal.
\end{proof}

\begin{lemma} \label{dimone} Let $( A, \m, k)$ be a Gorenstein local ring of
    dimension $d = 1.$ Let $I $ be an ideal with analytic spread $\l(I) =1,$ then
    %$\deg  \varepsilon^i_{I}(x) = 0 $ for  $ i \geq 1.$
    \begin{enumerate}[\rm (i)]
        \item if $\htt(I)=1$ and $I$ is non-principal ideal then $\deg  \varepsilon^i_{I}(x) = 0 $ for  all $ i \geq 1.$
        \item if $\htt(I)=0$ then $\deg  \varepsilon^i_{I}(x) = 0 $ for  all $ i \geq 1.$
    \end{enumerate}

\end{lemma}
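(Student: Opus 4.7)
The plan is to reduce the assertion about Bass numbers of $A/I^n$ to one about Bass numbers of $I^n$, show that $I^n$ is a maximal Cohen--Macaulay non-free $A$-module for every $n \geq 1$, and then invoke the Bass--Betti duality for MCM modules over a Gorenstein ring to conclude positivity of the relevant Bass numbers. Kodiyalam's theorem already gives $\deg \ve^i_{I}(x) \leq d-1 = 0$, so the task is simply to show that the eventual polynomial value is strictly positive for every $i \geq 1$.

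I first apply $\ext^{\bullet}_A(k,-)$ to $0 \rt I^n \rt A \rt A/I^n \rt 0$. Since $A$ is Gorenstein of dimension one, $\ext^j_A(k, A) = \delta_{j,1}\, k$, so the long exact sequence produces isomorphisms
\[
\ext^i_A(k, A/I^n) \cong \ext^{i+1}_A(k, I^n) \quad \text{for } i \geq 2,
\]
together with a surjection $\ext^1_A(k, A/I^n) \twoheadrightarrow \ext^2_A(k, I^n)$. Writing $\mu^i(-)$ for $\dim_k \ext^i_A(k,-)$, this yields $\mu^i(A/I^n) \geq \mu^{i+1}(I^n)$ for all $i \geq 1$, so it suffices to prove $\mu^j(I^n) \geq 1$ for every $j \geq 2$.

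Next I verify that $I^n$ is MCM and non-free. The depth lemma applied to the same short exact sequence (using $A/I^n \neq 0$ and $\depth A = 1$) gives $\depth_A I^n \geq 1 = \dim A$, so $I^n$ is MCM. For non-freeness: in case (i), Lemma~\ref{principal} and the non-principality of $I$ force $\mu(I^n) \geq 2$ for every $n$, and since $\htt I = 1$ makes $I^n$ contain a regular element and hence have rank one over $A$, freeness would force $I^n \cong A$ (cyclic), which is impossible. In case (ii), $I \sub \p$ for some $\p \in \Min A \sub \Ass A$, so $\ann_A \p \neq 0$; hence $\ann_A I^n \supseteq \ann_A \p \neq 0$, while $\ann_A A^r = 0$ for $r \geq 1$, ruling out freeness. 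In either case the Auslander--Buchsbaum formula gives $\projdim_A I^n = \infty$, so $\beta^A_i(I^n) \geq 1$ for every $i \geq 0$.

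To conclude, I would invoke the Bass--Betti duality for MCM modules over a Gorenstein local ring of dimension $d$:
\[
\mu^{i+d}(M) = \beta^A_i(\Hom_A(M, A)) \quad \text{for } i \geq 0.
\]
In dimension one this is obtained, after passing to the completion, by Matlis-dualizing a minimal free resolution of $M^{\ast} := \Hom_A(M, A)$ to produce the minimal injective resolution of $(M^{\ast})^{\vee} = H^1_{\m}(M)$, then comparing with the minimal injective resolution of $M$, which---by MCM-ness---has no $E(k)$-summand in degree $0$ and from degree $1$ onwards agrees with that of $H^1_{\m}(M)$. Applied with $M = I^n$, whose dual $\Hom_A(I^n, A)$ is again MCM non-free by reflexivity, this yields $\mu^j(I^n) = \beta^A_{j-1}(\Hom_A(I^n, A)) \geq 1$ for $j \geq 1$. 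Combined with the second paragraph, $\mu^i(A/I^n) \geq 1$ for every $i \geq 1$ and $n \gg 0$, and therefore $\deg \ve^i_{I}(x) = 0$. The main obstacle is the Bass--Betti identity in the last step; depending on how explicitly it is used elsewhere in the paper, it may be cleanest to state and prove it as a separate lemma (or to cite Avramov--Foxby) rather than derive it inline.
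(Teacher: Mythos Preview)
Your argument is correct, but it takes a genuinely different route from the paper. The paper argues by contradiction directly on $A/I^n$: if $\mu^i(A/I^n)=0$ for some $i\ge 1$ (in case (i)) or $i\ge 2$ (in case (ii)), then \cite[3.5.12]{BH} forces $\injdim_A A/I^n<\infty$, hence $\projdim_A A/I^n<\infty$ by Gorensteinness, and Auslander--Buchsbaum together with Lemma~\ref{principal} produces the contradiction; the remaining case $i=1$ in (ii) is handled separately via the behaviour of Bass numbers along prime specialisation (\cite[3.1.13]{BH}) and $\Ass(A/I^n)\neq\emptyset$. By contrast, you pass to $I^n$, show it is MCM non-free, and invoke the Bass--Betti duality $\mu^{j+d}(M)=\beta_j^A(M^\ast)$ for MCM modules over a Gorenstein ring. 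The paper's approach is lighter---it needs only the standard characterisation of finite injective dimension by vanishing Bass numbers and never requires the duality identity you flag as ``the main obstacle''---whereas your approach has the virtue of handling all $i\ge 1$ uniformly in both height cases, avoiding the separate $i=1$ argument the paper needs in case~(ii). If you keep your route, it is indeed cleanest to cite the duality (e.g.\ Avramov--Foxby) rather than derive it inline.
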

\begin{proof} As $d=1$ from \cite[corollary 4]{Theo} we know that $\varepsilon^i_{I}(n) = c $ for $ n >>  0.$
    So if $c=0$ then $\ext^i_A(k, A/I^n) = 0$ for all $ n >> 0.$ There are two cases, $ \htt(I) =1$ and $\htt(I) =0.$
    (i)  If $\htt(I)=1$ then $\dim A/I^n =0.$ So if $\varepsilon^i_{I}(n) = 0 $ for some $ i \geq 1,$ then we have by  \cite[3.5.12]{BH}
    that $  \injdim_A A/I^n < \infty .$  Now since
    $A$ is Gorenstein this gives us $\projdim_A A/I^n < \infty.$ As
    $ d=1$  for $ n >> 0$ we find that $ \projdim A/I^n $ is either $0$ or $1$  by Auslander-Buchsbaum formula. If  $ \projdim A/I^n =0 $  then $I^n=0.$
    This contradicts $\htt(I)=1.$ So  $ \projdim A/I^n =1. $ Hence,
    \[ 0 \longrightarrow A^r \longrightarrow A \longrightarrow A/I^n \longrightarrow 0\]
    Since $\rank_A(A/I^n) =0$ we obtain that $r=1.$ So $I^n$ is
    principal. So by Lemma \ref{principal} above $I$ is a principal ideal. This is a contradiction to the fact that $I$ is non-principal ideal.
    Hence $c \neq 0$ and
    $\deg  \varepsilon^i_{I}(x) = 0 $ in this case.\\
    (ii) Suppose $\htt(I)=0.$  In this case $\dim A/I^n =1.$ So if $\varepsilon^i_{I}(n) = 0$ for some $i \geq 2,$  we have by
    \cite[3.5.12]{BH} that $  \injdim_A A/I^n < \infty .$  Repeating the arguments as above we  find that $ \projdim A/I^n $ is either $0$ or $1.$
    If  $ \projdim A/I^n =0 $  then $I^n=0.$ This contradicts the fact that $l(I)=1.$  If $ \projdim A/I^n =1 $ then $ \projdim I^n =0 $ and so  $I^n$ is
    a free $A$-module of rank $1.$ So $I^n=(u)$ for some $u \in A.$ It is then easy to show that $u$ is $A$-regular. So $\grade(I^n) \geq 1. $ Since
    $\htt(I)= \htt(I^n)=\grade I^n $ we have $\htt(I) \geq 1 , $ contradicting $\htt(I)=0.$ Hence we obtain $\ext^i_A(k, A/I^n) \neq 0$ for all $ n >> 0.$  This
    shows that $\deg  \varepsilon^i_{I}(x) = 0 $ for  $ i \geq 2.$  \\
    Now suppose $i=1.$ We claim that $\ext^1_A(k, A/I^n) \neq 0$ for $ n >> 0.$ Suppose $\ext^1_A(k, A/I^n) = 0$ for $n >> 0.$ Then by
    \cite[3.1.13]{BH}, we have $\Hom_A(A/\mathfrak p, A/I^n)=0$ for all $ \mathfrak p \neq \m$ and $n >> 0.$ But then $ \mathfrak p$
    contains $A/I^n$-regular element. So all prime ideals $\mathfrak p \in \Spec A$ contain $A/I^n$-regular element and hence $\Ass(A/I^n) = \emptyset.$
    So $A/I^n=0$  contradicting the fact that $I^n$ is a proper ideal.

    \end{proof}

\section{\bf General Case}
We now do the general case where $(A, \m)$ is a Gorenstein local
ring of dimension $d \geq 1.$ Recall from  ${ 2.7}$  that $L^I(M)$
is a graded $\mathcal R(I)$-module.

   \begin{lemma}\label{fingen} Let $(A, \m)$ be a Gorenstein local ring of dimension $d.$ Let $M$ be a finitely generated $A$-module with $\projdim M < \infty.$ For $i \geq d+1$
    we have \[  \ext^i_A(k, L^I(M)(-1)) \cong \ext^{i+1}_A(k, \mathcal R(I,M))\]
   \end{lemma}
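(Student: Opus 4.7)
My plan is to apply the functor $\ext_A^\bullet(k,-)$ to the fundamental short exact sequence of graded $\mathcal R(I)$-modules from {\bf 2.7},
\[
0 \to \mathcal R(I, M) \to M[t] \to L^I(M)(-1) \to 0,
\]
and read the claimed isomorphism directly off the resulting long exact sequence. For this to work I need to establish
\[
\ext^j_A(k, M[t]) = 0 \qquad \text{for } j \geq d+1;
\]
once that vanishing is in hand, applying the long exact sequence in degrees $i$ and $i+1$ with $i\geq d+1$ forces the connecting homomorphism $\ext^i_A(k, L^I(M)(-1)) \to \ext^{i+1}_A(k, \mathcal R(I,M))$ to be an isomorphism.

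For the vanishing, I would first note that as an ungraded $A$-module $M[t]$ is the countable direct sum $\bigoplus_{n \geq 0} M$. Since $A$ is Noetherian, $k$ is finitely presented and so admits a resolution $F_\bullet \to k$ by finitely generated free $A$-modules. For each such $F_p$ the functor $\Hom_A(F_p,-)$ commutes with arbitrary direct sums, and taking cohomology of the resulting complex gives
\[
\ext^j_A(k, M[t]) \;\cong\; \bigoplus_{n \geq 0} \ext^j_A(k, M).
\]
Thus the problem reduces to showing $\ext^j_A(k,M)=0$ for $j \geq d+1$.

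This is where the Gorenstein hypothesis is used. Since $A$ is Gorenstein of dimension $d$, one has $\injdim_A A = d$. Because $\projdim_A M < \infty$, $M$ admits a finite resolution by finitely generated free $A$-modules; a routine induction on the length of this resolution (using that the injective dimension of the middle term of a short exact sequence is bounded by the maximum of those of the outer terms) shows $\injdim_A M \leq d$. Hence $\ext^j_A(k, M) = 0$ for all $j \geq d+1$, completing the required vanishing.

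I do not anticipate a substantive obstacle here: the argument is essentially bookkeeping combining the exact sequence of {\bf 2.7}, the commutation of $\ext$ with $\bigoplus$ in the second argument when the first argument is finitely presented, and the Bass-type bound $\injdim_A M \leq d$ for finite projective dimension modules over a Gorenstein ring. The only mildly delicate point is the $\ext$-commutes-with-$\bigoplus$ identification, which however is automatic once a finitely generated free resolution of $k$ is fixed.
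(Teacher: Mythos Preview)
Your proposal is correct and follows essentially the same approach as the paper: both apply $\ext_A^\bullet(k,-)$ to the short exact sequence $0 \to \mathcal R(I,M) \to M[t] \to L^I(M)(-1) \to 0$ from {\bf 2.7}, observe that $\ext^j_A(k,M[t]) \cong \bigoplus_{n\geq 0}\ext^j_A(k,M)$, and use that the Gorenstein hypothesis together with $\projdim_A M < \infty$ forces $\injdim_A M \leq d$, so these groups vanish for $j \geq d+1$. Your write-up is in fact slightly more careful than the paper's, spelling out why $\ext$ commutes with the direct sum and why the injective dimension is bounded by $d$ rather than merely finite.
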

   \begin{proof} Since $A$ is Gorenstein, $\projdim M < \infty $ gives $\injdim M < \infty. $ Consider the following short exact sequence of graded $\mathcal R(I)$-modules

   \[ 0 \rightarrow \mathcal R(I, M) \rightarrow M[t] \rightarrow L^I(M)(-1)\rightarrow 0\]
   Applying $\ext^i_A(k, -)$ we get

   \[  \rightarrow \ext^i_A(k, M[t]) \rightarrow \ext^i_A(k,L^I(M)(-1)) \rightarrow \ext^{i+1}_A(k, \mathcal R(I, M)) \rightarrow \ext^{i+1}_A(k, M[t]) \rightarrow  \]
   Since $ \injdim M < \infty $  for $ i \geq d+1 $ we obtain $\ext^i_A(k, M[t]) \cong \displaystyle \displaystyle \bigoplus_{n \geq 0} \ext^i_A(k, M) =0.$
   Hence
   \[  \ext^i_A(k, L^I(M)(-1)) \cong \ext^{i+1}_A(k, \mathcal R(I,M)).\]
  % Since $Ext( k, -)$ commutes with direct sums we get
   %\[  \ext^i_A(k, M/I^nM) \cong \ext^i_A(k, I^nM)\]

   \end{proof}

Let $(A, \m, k)$ be a Gorenstein local ring of dimension $ d\geq 1.$  Let $I$ be an  ideal of $A$ with $\htt(I) \geq d-1.$
Let $ \underline x = x_1, x_2,...,x_{d-1}$ be a $A-$superficial sequence w.r.t. $I.$
For $1 \leq l \leq d$ we set $$A_l(\underline x) = A/(x_1,...,x_{d-l}).$$

Note that $A_1(\underline x) = A/((x_1,...,x_{d-1}) $ and $A_d(\underline x) = A.$
   \begin{lemma}\label{extnonzero} Let $(A, \m)$ be a Gorenstein local ring of dimension $d\geq 2.$ Let $A_1 = A_1(\underline x).$ If $I$
   is an ideal of $A$ with $\htt(I) \geq d-1$ and $\curv_A(I^n) > 1 $ for some $ n \geq 1$ then for all $ i \geq 1$ we have
   \[\ext^i_A(k, \frac{A_1}{I^nA_1}) \neq 0.\]
   \end{lemma}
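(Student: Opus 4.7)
Set $N = A_1/I^n A_1 = A/((x_1, \dots, x_{d-1}) + I^n)$. Since $x_1, \dots, x_{d-1} \in I \subseteq \m$, $N$ is a nonzero proper quotient of $A$, and $\dim_A N \leq \dim A_1 = 1$. The plan is to argue by contradiction: assume $\ext^i_A(k, N) = 0$ for some $i \geq 1$, show that this forces $\projdim_A N < \infty$, and then derive a contradiction with the curvature hypothesis through the invariants recorded in 2.3.

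To establish $\projdim_A N < \infty$ I would split into two sub-cases. If $i > \dim N$, then by \cite[3.5.12]{BH} (exactly as in Lemma~\ref{dimone}) one gets $\injdim_A N < \infty$, and because $A$ is Gorenstein this is equivalent to $\projdim_A N < \infty$. The remaining sub-case is $i = 1$ with $\dim N = 1$; here I would adapt the $i = 1$ argument from Lemma~\ref{dimone}(ii): vanishing of $\ext^1_A(k, N)$ together with \cite[3.1.13]{BH} forces $\Hom_A(A/\p, N) = 0$ for every prime $\p \neq \m$, so $\Ass_A N \subseteq \{\m\}$, which would give $\dim N \leq 0$, a contradiction. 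Thus this sub-case never actually occurs, and in all remaining cases $\projdim_A N < \infty$.

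Once $\projdim_A N < \infty$ is in hand, I would observe that $\underline x = x_1, \dots, x_{d-1}$ is $A$-regular (since $A$ is \CM, $\htt I \geq d-1$, and $\underline x$ is $I$-superficial), so that change of rings applies: because $(\underline x)$ annihilates $N$, the standard formula yields $\projdim_{A_1} N = \projdim_A N - (d-1) < \infty$, whence $\cx_{A_1} N = \curv_{A_1} N = 0$. To obtain the contradiction I would then apply property~(8) of 2.3 to $M = A/I^n$: the hypothesis $\curv_A(I^n) > 1$ together with property~(7) gives $\curv_A(A/I^n) = \curv_A(\syz_1^A(A/I^n)) = \curv_A(I^n) > 1$, in particular $\projdim_A(A/I^n) = \infty$; since $M/\underline x M = N$, property~(8) then yields $\curv_{A_1} N = \curv_A(A/I^n) > 1$, contradicting $\curv_{A_1} N = 0$.

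The main obstacle will be the boundary sub-case $i = 1$ with $\dim N = 1$, where \cite[3.5.12]{BH} does not apply and one must rule out vanishing of $\ext^1_A(k, N)$ directly by a depth and associated-primes argument in the spirit of Lemma~\ref{dimone}(ii). Once that is dispatched, the change-of-rings behavior of projective dimension and the invariance of curvature under modding out a regular sequence make the rest essentially automatic.
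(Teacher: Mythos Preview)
Your proof is correct and follows essentially the same route as the paper: both hinge on showing that $\projdim_A(A_1/I^nA_1)$ cannot be finite (via the curvature identity $\curv_{A_1}(A_1/I^nA_1)=\curv_A(A/I^n)>1$ from 2.3(8)), then invoke \cite[3.5.12]{BH} for $i>\dim N$ and handle the boundary case $i=1$, $\dim N=1$ by the associated-primes argument based on \cite[3.1.13]{BH}. The only difference is organizational—the paper first establishes $\projdim_A N=\infty$ and then reads off the nonvanishing, while you assume a vanishing $\ext^i$ and run the contrapositive—and your passage from $\projdim_A N<\infty$ to $\curv_{A_1}N=0$ via the change-of-rings formula $\projdim_{A_1}N=\projdim_A N-(d-1)$ is a slight streamlining of the paper's detour through the complexity bound $\cx_{A_1}N\le d-1\Rightarrow\curv_{A_1}N\le 1$.
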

   \begin{proof} Let $n \geq 1 $ be such that $\curv(I^n) > 1.$  We  claim   $\projdim_A A_1/I^nA_1 =
   \infty.$ So suppose on the contrary that  $\projdim_A A_1/I^nA_1  < \infty.$
   We first observe that $\frac{A_1}{I^nA_1} \otimes_A A_1 \cong \frac{A_1}{I^nA_1}.$ So by \cite[4.2.5(4)]{LA} we obtain
    \[\cx_{A_1}\frac{A_1}{I^nA_1} \leq \cx_A \frac{A_1}{I^nA_1} + d-1.\]
   Since     $ \projdim_A \frac{A_1}{I^nA_1} < \infty  $ we have $\cx_A \frac{A_1}{I^nA_1} =0 $ and so $ \cx_{A_1}\frac{A_1}{I^nA_1} \leq d-1.$
      By \cite[4.2.3(4)]{LA},    $ \curv_{A_1}\frac{A_1}{I^nA_1} \leq 1.$ Since $\curv(I^n) > 1 ,$ we have $\projdim_A A/I^n =\infty.$ Hence by
       \cite[4.2.5(4)]{LA},
      $\curv_A A/I^n = \curv_{A_1} A_1/I^nA_1.$ Already $ \curv_{A_1}\frac{A_1}{I^nA_1} \leq 1,$ so we have $\curv_A A/I^n \leq 1.$ Also by
       \cite[4.2.4(2)]{LA}, $\curv(A/I^n) = \curv(I^n)$ and so $\curv(I^n) \leq 1.$ This contradicts the fact that $\curv(I^n) > 1.$
       Hence we have $\projdim_A \frac{A_1}{I^nA_1} = \infty.$ In this case, $\injdim_A \frac{A_1}{I^nA_1} = \infty$ as $A$ is Gorenstein.
        Hence from \cite[3.5.12]{BH} we get $\mu_i(\m, \frac{A_1}{I^nA_1}) \neq 0$ for all $i > \dim \frac{A_1}{I^nA_1}.$ \\
        \indent In the first case when $\htt(I) =d$ we have
        $\dim \frac{A_1}{I^nA_1}=0.$
        So in this case we obtain $\ext^i_A(k, \frac{A_1}{I^nA_1}) \neq 0$ for all $i \geq
        1.$ \\
        \indent When $\htt(I) =d-1,$ we have $\dim
        \frac{A_1}{I^nA_1}=1$ and so $\ext^i_A(k, \frac{A_1}{I^nA_1}) \neq
        0$ for all $ i \geq 2. $ Suppose now that $\ext^1_A(k, \frac{A_1}{I^nA_1})
        =0.$ In this case by \cite[3.1.13]{BH}, we have $\Hom_A(A/\mathfrak p, A/I^n)=0$ for all prime ideals $\mathfrak p $ with $ \htt(\mathfrak
        p)=d-1.$ But then $ \mathfrak p$ contains $A/I^n$-regular
        element. In particular all minimal primes of $I^n$ contain
        $A/I^n$-regular element. This is a contradiction. So $\ext^1_A(k,
        \frac{A_1}{I^nA_1}) \neq 0.$ Thus in both the cases we have $ \ext^i_A(k, \frac{A_1}{I^nA_1}) \neq
        0$ for all $i \geq 1.$

   \end{proof}

%\begin{remark} It is well known that $\projdim(I^n) = \infty \Longleftrightarrow  \curv(I^n) \geq 1 \Longleftrightarrow \cx(I^n) \geq 1, $ see \cite[Remark 4.2.3]{LA}

%Hence $\curv_A(\frac{A}{I^nA}) =0.$ But  by \cite[4.2.2(2)]{LA} we have $\curv_A(\frac{A}{I^nA})= \curv_A (I^n).$  Now our hypothesis that $\curv_A(I^n) >1$ gives $\curv_A(\frac{A}{I^nA}) >1 $ which is a contradiction.
%\end{remark}

   \begin{lemma}\label{degzero} Let $(A, \m, k)$ be a Gorenstein local ring of dimension $ d\geq 2.$  Let $I$ be an ideal of $A$ with $\htt(I) \geq d-1$ and
    $\curv(I^n) > 1$ for all $n \geq 1.$ For any $ \underline x = x_1, x_2,...,x_d,$  $A$-superficial sequence
   w.r.t. $I$ and for $ i \geq 1,$ we have  $\deg \varepsilon^i_{ A_1(\underline x), I}= 0.$
   \end{lemma}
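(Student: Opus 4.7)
The plan is to reduce to the one-dimensional Gorenstein case by modding out the regular sequence $x_1, \ldots, x_{d-1}$, transfer the $\ext^i_A$ computation to one over $A_1$ via a change-of-rings identity, and then combine Kodiyalam's theorem over $A_1$ with the non-vanishing supplied by Lemma~\ref{extnonzero}.

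First, since $A$ is Cohen--Macaulay and $\htt(I) \ge d-1$, the $I$-superficial elements $x_1, \ldots, x_{d-1}$ form an $A$-regular sequence, so $A_1 = A/(x_1, \ldots, x_{d-1})$ is a Gorenstein local ring of dimension $1$ with residue field $k$. The key homological input is the identity
\[
\ext^i_A(k, N) \;\cong\; \bigoplus_{j=0}^{d-1} \ext^{i-j}_{A_1}(k, N)^{\binom{d-1}{j}}
\]
valid for every $A_1$-module $N$. To see this, use the Koszul complex $K_\bullet(x_1, \ldots, x_{d-1}; A)$ as a free resolution of $A_1$ over $A$; tensoring with $k$ gives $K_\bullet(x_1, \ldots, x_{d-1}; k)$, whose differentials all vanish since each $x_t \in \m$ annihilates $k$. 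Hence $\Tor_j^A(k, A_1) \cong k^{\binom{d-1}{j}}$ with no further structure, which forces the Cartan--Eilenberg change-of-rings spectral sequence
\[
E_2^{p,q} \;=\; \ext^{p}_{A_1}\bigl(\Tor_{q}^A(k, A_1), N\bigr) \;\Longrightarrow\; \ext^{p+q}_A(k, N)
\]
to degenerate and yield the displayed isomorphism.

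Specializing to $N = A_1/I^{n+1}A_1$ and taking lengths gives
\[
\varepsilon^i_{A_1, I}(n) \;=\; \sum_{j=0}^{d-1} \binom{d-1}{j}\, \ell\bigl(\ext^{i-j}_{A_1}(k, A_1/(IA_1)^{n+1})\bigr).
\]
Kodiyalam's theorem applied to the one-dimensional ring $A_1$ shows that each summand on the right is a polynomial in $n$ of degree at most $\dim A_1 - 1 = 0$ for $n \gg 0$; hence $\deg \varepsilon^i_{A_1, I}(x) \le 0$. On the other hand, the hypothesis $\curv(I^n) > 1$ for every $n \ge 1$ places us in the scope of Lemma~\ref{extnonzero}, which gives $\ext^i_A(k, A_1/I^{n+1}A_1) \neq 0$ for every $n \ge 0$ and every $i \ge 1$. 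Thus $\varepsilon^i_{A_1, I}(x)$ is a nonzero constant polynomial, proving $\deg \varepsilon^i_{A_1, I}(x) = 0$.

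The main technical obstacle is establishing the change-of-rings identity cleanly, in particular the degeneration of the spectral sequence above, together with confirming that Kodiyalam's polynomial-degree bound applies uniformly across all Ext degrees, including the socle-type term $j = i$ (when $i \le d-1$) that is not literally covered by the $i \ge 1$ formulation quoted in the introduction.
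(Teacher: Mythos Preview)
Your overall strategy---bounding $\varepsilon^i_{A_1,I}$ above by $0$ and then invoking Lemma~\ref{extnonzero} for nonvanishing---matches the paper's. The difference is in how you obtain the upper bound, and there your argument has a genuine gap.

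The paper does not pass to $A_1$ at all for the upper bound. It simply quotes Theodorescu's result \cite[Corollary~4]{Theo}, which bounds the degree of $n\mapsto \ell(\ext^i_A(k,M/I^{n+1}M))$ by $\dim M-1$ rather than $\dim A-1$; applied with $M=A_1$ this immediately gives degree $\le \dim A_1-1=0$. No change-of-rings is needed.

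Your route through the Cartan--Eilenberg spectral sequence breaks at the degeneration step. Knowing that $\Tor_q^A(k,A_1)\cong k^{\binom{d-1}{q}}$ does \emph{not} force $d_r=0$ for $r\ge 2$: the differentials depend on the isomorphism class of $k\otimes^L_A A_1$ in $D(A_1)$, not merely on its homology. The complex $K_\bullet(\underline x;k)$ you produce, with zero differentials, represents $k\otimes^L_A A_1$ in $D(A)$ (or $D(k)$), but the correct representative in $D(A_1)$ is $P_\bullet\otimes_A A_1$ for a free $A$-resolution $P_\bullet$ of $k$, and these need not be quasi-isomorphic over $A_1$. Concretely, take a single regular element $x\in\m^2$: with $A=k[[t]]$, $x=t^2$, $A_1=k[t]/(t^2)$, the complex $A_1\xrightarrow{t}A_1$ is not formal in $D(A_1)$ and the spectral sequence has nonzero $d_2$. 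Since nothing in the hypotheses prevents the $I$-superficial elements $x_j$ from lying in $\m^2$ (e.g.\ when $I\subseteq\m^2$), your direct-sum identity can fail in the setting of the lemma.

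That said, your approach is easily repaired: even without degeneration, $E_\infty^{p,q}$ is a subquotient of $E_2^{p,q}$, so
\[
\ell\bigl(\ext^i_A(k,A_1/I^{n+1}A_1)\bigr)\;\le\;\sum_{q=0}^{d-1}\binom{d-1}{q}\,\ell\bigl(\ext^{i-q}_{A_1}(k,A_1/I^{n+1}A_1)\bigr),
\]
and each summand on the right is eventually constant by Kodiyalam over the one-dimensional ring $A_1$ (the $q=i$ term is the socle length, which is also bounded). This inequality already gives $\deg\varepsilon^i_{A_1,I}\le 0$, and then Lemma~\ref{extnonzero} finishes the proof exactly as you intended.
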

 \begin{proof} From \cite[corollary 4]{Theo} we
    know that the numerical function \[ n \longmapsto \ell \Big(\ext^i_A\Big(k, \frac{A_1}{I^nA_1}\Big)\Big)\] is given by a polynomial
    $\varepsilon^i_{A_1,I}(x) $ of degree at most $ \dim A_1 -1 = 0.$ Now since $\curv(I^n) >  1$ for all $n \geq 1,$  by lemma \ref{extnonzero}
    we have $\ext^i_A\Big(k, \frac{A_1}{I^nA_1}\Big) \neq 0$  for all $ n \geq 1.$  Thus  $\varepsilon^i_{A_1,I} $ is a non-zero constant polynomial
    and so $\deg \varepsilon^i_{A_1,I}(x) = 0.$
       \end{proof}

   \begin{theorem}\label{theorem1}
    Let $(A, \m, k)$ be a Gorenstein local ring of dimension $ d\geq 1.$  Let $I$ be an ideal of $A$ with $\htt(I) \geq d-1$ and  $\curv(I^n) > 1$ for
     all $n \geq 1.$  Then for $ i \geq d+1,$ we have $\deg \varepsilon^i_{A_l, I}(x) =l-1.$ In particular when $l=d,$ we get $\deg \varepsilon^i_{I}(x)=d-1.$
     \end{theorem}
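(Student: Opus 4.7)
My plan is to prove the theorem by induction on $l$, with $1 \leq l \leq d$. The base case $l = 1$ splits into two subcases according to whether $d \geq 2$ or $d = 1$. When $d \geq 2$, $A_1 = A/(x_1,\ldots,x_{d-1})$ is one-dimensional and Lemma~\ref{degzero} immediately gives $\deg \varepsilon^i_{A_1, I}(x) = 0$ for every $i \geq 1$, which certainly covers $i \geq d+1$. When $d = 1$, the superficial sequence is empty so $A_1 = A$; the hypothesis $\curv(I^n) > 1$ rules out $I$ being principal (since a height-one principal ideal would admit the two-term resolution $0 \to A \to A \to A/I^n \to 0$ and hence have zero curvature), so Lemma~\ref{dimone} applies and yields $\deg \varepsilon^i_I(x) = 0$ for $i \geq 2 = d+1$.

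For the inductive step, assume $\deg \varepsilon^i_{A_l, I}(x) = l - 1$ for all $i \geq d+1$ and deduce the corresponding statement for $A_{l+1}$. Since $A$ is Cohen-Macaulay and $\htt(I) \geq d - 1$, the superficial sequence $x_1,\ldots,x_{d-1}$ may be chosen to be $A$-regular, so $x_{d-l}$ is a nonzerodivisor on $A_{l+1}$. Combined with the superficial identity $(I^{n+1}A_{l+1} :_{A_{l+1}} x_{d-l}) = I^n A_{l+1}$ valid for $n \gg 0$, this yields for $n \gg 0$ the short exact sequence of $A$-modules
\[
0 \longrightarrow \frac{A_{l+1}}{I^n A_{l+1}} \xrightarrow{\,\cdot\, x_{d-l}\,} \frac{A_{l+1}}{I^{n+1} A_{l+1}} \longrightarrow \frac{A_l}{I^{n+1} A_l} \longrightarrow 0.
\]
Applying $\ext^i_A(k, -)$ produces a long exact sequence tying $\varepsilon^i_{A_{l+1}, I}$ to the inductively controlled $\varepsilon^{i-1}_{A_l, I}$ and $\varepsilon^i_{A_l, I}$. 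The upper bound $\deg \varepsilon^i_{A_{l+1}, I}(x) \leq l$ is immediate from Kodiyalam's theorem, since $\dim A_{l+1} = l + 1$.

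For the matching lower bound, I would invoke Lemma~\ref{fingen}: $A_{l+1}$ has finite projective dimension over $A$ (being a Gorenstein quotient by a regular sequence), and for $i \geq d + 1$ the lemma gives
\[
\ext^i_A(k, L^I(A_{l+1})(-1)) \cong \ext^{i+1}_A(k, \mathcal R(I, A_{l+1})).
\]
The right-hand side is a finitely generated graded $\mathcal R(I)$-module, and since Ext from $k$ is annihilated by $\m$ its module structure factors through the fiber cone $F(I) = \mathcal R(I)/\m \mathcal R(I)$. Thus the function $n \mapsto \varepsilon^i_{A_{l+1}, I}(n)$ eventually agrees, up to a shift in degree, with the Hilbert function of a finitely generated graded $F(I)$-module.

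The main obstacle will be converting the bound $\deg \leq l$ into the equality $\deg = l$. The long exact sequence alone provides only an inequality on first differences, so one must exclude possible cancellations among the various Ext terms. I plan to handle this by combining two ingredients: the finite generation over $F(I)$ coming from Lemma~\ref{fingen}, which makes the lengths eventually polynomial in $n$, and an iterated form of Lemma~\ref{extnonzero}, which guarantees that the relevant Bass numbers are nonzero at every stage of the induction. The positivity of these Bass numbers, together with the identity of first differences induced by the displayed short exact sequence, should force the leading coefficient of $\varepsilon^i_{A_{l+1}, I}(x)$ at the expected degree $l$ not to vanish, completing the induction. Specializing to $l = d$ yields the stated equality $\deg \varepsilon^i_I(x) = d - 1$.
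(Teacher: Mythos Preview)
Your overall strategy---induction on $l$, with the base case supplied by Lemma~\ref{degzero} (or Lemma~\ref{dimone} when $d=1$) and the inductive step driven by the short exact sequence
\[
0 \longrightarrow \frac{A_{l+1}}{I^n A_{l+1}} \longrightarrow \frac{A_{l+1}}{I^{n+1} A_{l+1}} \longrightarrow \frac{A_l}{I^{n+1} A_l} \longrightarrow 0
\]
---matches the paper's. But the inductive step as you describe it has a real gap.

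Once you apply $\ext^i_A(k,-)$ to the displayed sequence you obtain only a \emph{long} exact sequence, so there is no ``identity of first differences''; the difference $\varepsilon^i_{A_{l+1},I}(n+1)-\varepsilon^i_{A_{l+1},I}(n)$ differs from $\varepsilon^i_{A_l,I}(n+1)$ by the lengths of the kernel and cokernel of the multiplication maps $\mu_y$ on $\ext^i$ and $\ext^{i+1}$. Positivity of Bass numbers (Lemma~\ref{extnonzero}) says nothing about these error terms, and eventual polynomiality (which you already knew from Kodiyalam) does not prevent cancellation in the leading coefficient. So the argument you sketch does not force $\deg \varepsilon^i_{A_{l+1},I}(x)=l$.

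The missing idea is to use Lemma~\ref{fingen} not merely as a source of polynomiality but to manufacture a \emph{filter-regular} element. Because $\ext^s_A(k, L^I(A_{l+1}))$ is a finitely generated graded $\mathcal R(I)$-module for $s=i,\,i+1$, one may choose $y\in I$ so that $yt$ is filter-regular on both of these modules and simultaneously $I$-superficial on $A_{l+1}$. This choice cannot in general be the pre-selected $x_{d-l}$; the paper replaces $x_{d-l}$ by $y$ and extends $x_1,\ldots,x_{d-l-1},y$ to a new maximal superficial sequence $\underline y$, noting that $A_{l+1}(\underline x)=A_{l+1}(\underline y)$. Filter-regularity makes the maps $\mu_y$ injective on $\ext^i$ and $\ext^{i+1}$ for $n\gg 0$, which collapses the long exact sequence to the short exact sequence
\[
0 \to \ext^i_A\!\Big(k,\tfrac{A_{l+1}}{I^nA_{l+1}}\Big) \to \ext^i_A\!\Big(k,\tfrac{A_{l+1}}{I^{n+1}A_{l+1}}\Big) \to \ext^i_A\!\Big(k,\tfrac{A_l(\underline y)}{I^{n+1}A_l(\underline y)}\Big) \to 0,
\]
giving exactly $\varepsilon^i_{A_l(\underline y),I}(n+1)=\varepsilon^i_{A_{l+1},I}(n+1)-\varepsilon^i_{A_{l+1},I}(n)$ for $n\gg 0$. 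Now induction on $l$ yields $\deg \varepsilon^i_{A_{l+1},I}(x)=l$ with no further work.
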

   \begin{proof} The case $d=1$  follows from Lemma \ref{dimone}. So now $d \geq 2.$ Note first that we may assume the residue field of $A$ is infinite. Let $A_l= A_l(\underline x).$ We prove the result by induction on $l.$ The result is true for $l=1$ by the Lemma \ref{degzero} above.
   So now assuming the result true for $l \geq 1, $ we prove it for $ l+1.$ Note $A_{l+1} = \frac{A}{(x_1, \ldots , x_{d-l-1})}. $
   By Lemma \ref{fingen} for $ i \geq d+1,$ $\ext^i_A(k, L^I(A_l))$ is finitely generated graded $\mathcal R(I)$-module. So we can choose $ y \in I$ satisfying the following two properties,
 \begin{enumerate}
  \item [\rm (i)] $yt \in \mathcal R(I)_1$ is $\ext^s_A(k, L^I(A_l))$-filter regular for $ s= i, i+1.$
  \item [\rm (ii)] $y$ is $I$-superficial w.r.t. $A_{l+1}.$
   \end{enumerate}

  \noindent We now extend the superficial sequence $ \underline y' = x_1, \ldots , x_{d-l-1},y$ to a maximal $I$-superficial sequence $\underline y.$ Notice that $A_{l+1}(\underline x)= A_{l+1}(\underline y).$ Since $ y $ is $I$-superficial sequence for $A_{l+1}$ we have $I^{n+1}A_{l+1}: y = I^nA_{l+1}$ for $ n >>0.$  This shows that the map $\mu_y : \frac{A_{l+1}}{I^nA_{l+1}}  \longrightarrow \frac{A_{l+1}}{I^{n+1}A_{l+1}}  $ is injective.  Hence the following is a short exact sequence for $ n >> 0,$
      \[ 0 \longrightarrow \frac{A_{l+1}}{I^nA_{l+1}}  \xrightarrow[\hspace{17pt}]{\mu_y} \frac{A_{l+1}}{I^{n+1}A_{l+1}} \longrightarrow \frac{A_l(\underline y)}{I^{n+1}A_l(\underline y)}\longrightarrow 0.\]
      Applying  $\Hom(k, -)$ we get

      \[  \rightarrow \ext^i_A \Big(k, \frac{A_{l+1}}{I^nA_{l+1}} \Big)  \xrightarrow[\hspace{17pt}]{\mu_y}   \ext^i_A \Big( k, \frac{A_{l+1}}{I^{n+1}A_{l+1}}  \Big)  \rightarrow
      \ext^i_A \Big( k, \frac{A_l(\underline y)}{I^{n+1}A_l(\underline y)} \Big) \]

       $\quad  \rightarrow  \ext^{i+1}_A\Big(k,   \frac{A_{l+1}}{I^nA_{l+1}} \Big) \xrightarrow[\hspace{17pt}]{\mu_y} \ext^{i+1}_A \Big( k, \frac{A_{l+1}}{I^{n+1}A_{l+1}}  \Big) \rightarrow  $\\

Since  $yt \in \mathcal R(I)_1$ is $\ext^s_A(k, L^I(A_l))$-filter regular for $ s= i, i+1$ we have the following short exact sequence,
\[  0 \rightarrow \ext^i_A \Big(k, \frac{A_{l+1}}{I^nA_{l+1}} \Big)  \xrightarrow[\hspace{17pt}]{\mu_y}   \ext^i_A \Big( k, \frac{A_{l+1}}{I^{n+1}A_{l+1}}  \Big)  \rightarrow
      \ext^i_A \Big( k, \frac{A_l(\underline y)}{I^{n+1}A_l(\underline y)} \Big)  \rightarrow 0\]
      It now follows that $ \varepsilon^i_{A_l(\underline y),I}(n+1) = \varepsilon^i_{A_{l+1},I}(n+1) - \varepsilon^i_{A_{l+1},I}(n)$ for all $ n >> 0.$
      Now by induction hypothesis $ \deg  \varepsilon^i_{A_l(\underline y),I}(x) = l-1,$ we have that  $\deg  \varepsilon^i_{A_{l+1},I}(x) = l.$
       Notice that if $l=d$ we have $A_d= A.$ So we obtain in this case that $\deg  \varepsilon^i_{I}(x) = d-1.$

   \end{proof}

\section{\bf Betti Numbers of $A/I^n$}
 Let $(A, \m)$ be a \CM ~ local ring of dimension $d \geq 1$ with infinite residue field $k.$ We
 consider the following numerical function
 \[ n \mapsto \ell(\Tor_i^A(k, A/I^n)) \]
 It is well known that this numerical function is given by a
 polynomial for $ n >> 0$ of degree at most $d-1$ for $ i \geq 1,$  see \cite[Theorem 2]{Kod}. We denote this polynomial by $t^A_{I,i}(k, z).$ Note that   $\ell(\Tor_i^A(k,
 A/I^n))$ gives the $i^{th}$ Betti number of $A/I^n.$ We show that
 if $ \curv(I^n) > 1$ for all $ n \geq 1$ then $\deg t^A_{I,i}(k,
 z)$ is exactly $d-1.$

\begin{lemma}\label{tornonzero}
    Let $(A, \m)$ be a \CM ~ local ring of dimension $d=1.$ Suppose $I$ is  a non-principal $\m$-primary ideal of $A.$
    Then $\Tor_i^A(k, A/I^n)  \neq 0  $ for any $ n \geq 1$ and $ i \geq 1.$

\end{lemma}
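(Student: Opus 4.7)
The plan is a proof by contradiction: I would suppose $\Tor_i^A(k, A/I^n) = 0$ for some $i \geq 1$ and some $n \geq 1$, and show this forces $I$ to be principal, against the hypothesis. Vanishing of a single $\Tor$ module over a Noetherian local ring gives $\projdim_A(A/I^n) < i < \infty$. I would then invoke the Auslander--Buchsbaum formula: since $A$ is \CM ~ of dimension one, $\depth A = 1$, and since $I$ is $\m$-primary the module $A/I^n$ has finite length, so $\m \in \Ass(A/I^n)$ and $\depth(A/I^n) = 0$. This pins $\projdim_A(A/I^n)$ down to exactly $1$.

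Next, from the tautological short exact sequence
\[ 0 \longrightarrow I^n \longrightarrow A \longrightarrow A/I^n \longrightarrow 0 \]
and the fact that $A$ itself is free, $\projdim_A(A/I^n) = 1$ forces $I^n$ to be a projective, hence free over the local ring $A$, module of some rank $r$. To identify $r$, I would localize at any minimal prime $\p$ of $A$: because $I$ is $\m$-primary and $\p \neq \m$, we have $I \not\subseteq \p$, so $(I^n)_\p = A_\p$. The isomorphism $A_\p^r \cong A_\p$ in the Artinian local ring $A_\p$ then forces $r = 1$, so $I^n$ is principal.

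At this point Lemma \ref{principal} applies directly: $\mu(I^n) = 1$ implies $I$ itself is principal, contradicting our standing hypothesis that $I$ is non-principal. Hence $\projdim_A(A/I^n) = \infty$ for every $n \geq 1$. Since the Betti numbers of a nonzero finitely generated module of infinite projective dimension over a Noetherian local ring are positive in every homological degree (otherwise a minimal free resolution would terminate), this yields $\Tor_i^A(k, A/I^n) \neq 0$ for all $i \geq 1$ and all $n \geq 1$. I do not anticipate a real obstacle here: the only mildly subtle point is the localization-at-a-minimal-prime argument producing $r = 1$, and even that is essentially cosmetic once the Auslander--Buchsbaum reduction to $\projdim_A(A/I^n) = 1$ has been made.
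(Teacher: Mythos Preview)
Your proof is correct and follows essentially the same route as the paper: assume a vanishing $\Tor$, deduce finite projective dimension, use Auslander--Buchsbaum to get $\projdim_A(A/I^n)\leq 1$, conclude $I^n$ is free of rank one, and then invoke Lemma~\ref{principal}. The only differences are cosmetic: you rule out $\projdim=0$ via the depth computation rather than a separate case, and you supply an explicit localization argument for $r=1$, whereas the paper simply asserts that $I^n$ free implies $I^n$ principal.
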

\begin{proof} Suppose on the contrary $\Tor_i^A(k, A/I^n)  = 0  $ for some $n \geq 1$ and $ i \geq 1.$  Then $\projdim A/I^n < \infty .$ By the Auslander-Buchbaum formula  $\projdim A/I^n $ is either $0$ or $1.$ If $\projdim A/I^n = 0$ then $A/I^n$ is free $A$-module. So $I^n=0,$ a contradiction.  If  $\projdim A/I^n = 1$ then $I^n $ is free $A$-module and so $I^n$ is principal.  Hence by Lemma \ref{principal},  $I$ is principal ideal, a contradiction.

\end{proof}
Recall that for  an $I$-superficial sequence $ \underline x = x_1,
x_2,...,x_{d-1}$ in ideal $I$ of  $A$ and for $1 \leq l \leq d$ we
defined, $A_l(\underline x) = A/(x_1,...,x_{d-l}).$ Assume that
$\htt(I) \geq d-1.$
\begin{lemma} \label{tornonzero} Let $( A, \m)$ be a \CM ~ local ring with $\dim A = d \geq 2.$ Set $A_1= A_1(\underline x).$
Let $I$ be an ideal with $\htt(I) \geq d-1 $ and  $ \curv(I^n)
> 1$ for some $ n \geq 1.$ Then
     \begin{enumerate}
        \item [\rm (i)] $\projdim_A \frac{A_1}{I^nA_1} = \infty$ and
        \item [\rm (ii)] $ \Tor_i^A(k, \frac{A_1}{ I^nA_1}) \neq 0$ for all $ i \geq 1.$
    \end{enumerate}

\end{lemma}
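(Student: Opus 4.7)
The plan is to mirror the argument used in Lemma~\ref{extnonzero} for part~(i), and then deduce part~(ii) directly from part~(i) via a minimal free resolution.

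For part~(i), I argue by contradiction. Fix $n$ with $\curv(I^n)>1$ and suppose that $\projdim_A A_1/I^nA_1<\infty$, so by~2.3(1) we have $\cx_A(A_1/I^nA_1)=0$. Because $A$ is \CM\ of dimension $d\geq 2$ with $\htt(I)\geq d-1$, the $I$-superficial sequence $x_1,\ldots,x_{d-1}\in I$ can be chosen as part of a system of parameters, hence is $A$-regular; moreover it annihilates $A_1/I^nA_1$. Applying~2.3(8) with $M=A_1/I^nA_1$ (so $M'=M$) yields
\[
\cx_{A_1}(A_1/I^nA_1)\leq \cx_A(A_1/I^nA_1)+(d-1)= d-1<\infty,
\]
and hence $\curv_{A_1}(A_1/I^nA_1)\leq 1$ by~2.3(4). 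On the other hand, since $I^n$ is a first syzygy of $A/I^n$, property~2.3(7) gives $\curv_A(A/I^n)=\curv(I^n)>1$, and then~2.3(1) forces $\projdim_A A/I^n=\infty$. The second half of~2.3(8), applied to $M=A/I^n$ and $M'=A_1/I^nA_1$, then yields
\[
\curv_{A_1}(A_1/I^nA_1)=\curv_A(A/I^n)=\curv(I^n)>1,
\]
contradicting the bound derived above. This proves~(i).

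For part~(ii), I invoke a minimal free resolution $F_\bullet\rightarrow A_1/I^nA_1$ of the non-zero finitely generated $A$-module $A_1/I^nA_1$. Minimality means the differentials of $F_\bullet$ have entries in $\m$, so all differentials of $F_\bullet\otimes_A k$ vanish, giving $\Tor_i^A(k,A_1/I^nA_1)\cong F_i\otimes_A k$ for every $i\geq 0$. By~(i), $\projdim_A A_1/I^nA_1=\infty$, hence $F_i\neq 0$ for all $i\geq 0$, and therefore $\Tor_i^A(k,A_1/I^nA_1)\neq 0$ for every $i\geq 1$, as required.

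The substantive step is~(i); I do not expect a real obstacle, since the complexity/curvature bookkeeping is virtually identical to what is done in Lemma~\ref{extnonzero}. The only delicate point is verifying that $x_1,\ldots,x_{d-1}$ is $A$-regular so that~2.3(8) is applicable, which is automatic in the present \CM\ setting given the height hypothesis on $I$.
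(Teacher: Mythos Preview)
Your proposal is correct and follows essentially the same route as the paper: the paper simply states that the proof of (i) is identical to that of Lemma~\ref{extnonzero} and that (ii) follows from (i), and your argument reproduces exactly that complexity/curvature bookkeeping using the properties listed in~2.3, together with the standard minimal-free-resolution observation for (ii). One small wording point: the superficial sequence $\underline x$ is already given (via $A_1=A_1(\underline x)$), so rather than saying it ``can be chosen'' to be regular, you should note that any $I$-superficial sequence of length $d-1$ in a \CM\ ring of dimension $d$ is automatically $A$-regular; this does not affect the validity of your argument.
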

\begin{proof}
  Proof of (i) is the same as in Lemma \ref{extnonzero}. Proof of (ii) follows from (i).
\end{proof}

Notation : $ t^L_{I,i}(M, n) = \ell(\Tor_i^A(M, L/I^{n+1}L))$
\begin{corollary}\label{tordegzero}  Let $( A, \m)$ be a \CM ~ local ring with $\dim A = d \geq 2.$
Let $I$ be an ideal with $\htt(I) \geq d-1$ and  $ \curv(I^n) > 1$
for all $ n \geq 1.$ Then for all $i \geq 1, $ we have $\deg
t^{A_1}_{I,i}(k,z) =0.$
\end{corollary}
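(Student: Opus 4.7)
The plan is to mirror the argument used for the Ext statement in Lemma \ref{degzero} almost verbatim, replacing Ext by Tor. The two ingredients required are (a) an a priori degree bound on the polynomial $t^{A_1}_{I,i}(k,z)$ coming from a Kodiyalam-type result, and (b) a non-vanishing statement for the relevant Tor modules, which is exactly the content of the preceding lemma.

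First I would invoke the Tor analogue of the bound used in Lemma \ref{degzero}: since $x_1,\ldots,x_{d-1}$ is a superficial (hence regular, by Cohen--Macaulayness) sequence for $I$, the ring $A_1 = A/(x_1,\ldots,x_{d-1})$ is a one-dimensional $A$-module. Applying Kodiyalam's theorem in the form that bounds the degree of the Hilbert polynomial of $n \mapsto \ell(\Tor_i^A(k, A_1/I^{n+1}A_1))$ by $\dim A_1 - 1$, we obtain
\[
\deg t^{A_1}_{I,i}(k,z) \;\leq\; \dim A_1 - 1 \;=\; 0,
\]
so that $t^{A_1}_{I,i}(k,z)$ is a constant polynomial for $n \gg 0$.

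Next, to show this constant is nonzero, I would apply Lemma \ref{tornonzero}(ii) with $n$ replaced by $n+1$, for each $n \geq 0$. Since by hypothesis $\curv(I^m) > 1$ for every $m \geq 1$, the lemma yields
\[
\Tor_i^A\!\left(k,\; \frac{A_1}{I^{n+1}A_1}\right) \;\neq\; 0 \qquad \text{for every } n \geq 0 \text{ and } i \geq 1.
\]
Consequently the constant polynomial $t^{A_1}_{I,i}(k,z)$ takes strictly positive integer values for all large $n$, so this constant is nonzero. Combining the two conclusions gives $\deg t^{A_1}_{I,i}(k,z) = 0$, as claimed.

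The only mildly delicate point is ensuring the degree bound in step one refers to $\dim A_1 - 1$ rather than the crude $\dim A - 1$ from Kodiyalam's original statement quoted in the Introduction; but this is the Tor analogue of the Ext estimate the authors already used from \cite[Corollary~4]{Theo} in the proof of Lemma \ref{degzero}, and it is established by the same argument. Everything else is immediate once Lemma \ref{tornonzero} is in place, which is why the proposition should admit a proof of just a few lines.
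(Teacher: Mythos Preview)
Your proposal is correct and matches the paper's proof essentially line for line: the paper also bounds $\deg t^{A_1}_{I,i}(k,z)\leq \dim A_1 - 1 = 0$ (citing \cite[Theorem~2]{Kod} directly rather than an analogue of \cite[Corollary~4]{Theo}) and then invokes Lemma~\ref{tornonzero} to get non-vanishing of $\Tor_i^A(k, A_1/I^nA_1)$ for all $n\geq 1$, forcing the constant to be nonzero.
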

\begin{proof} As $\dim A_1 =1,$ we know from \cite[Theorem 2]{Kod} that $\deg t^{A_1}_{I,i}(k,z) \leq 0.$ Now by Lemma \ref{tornonzero} above we obtain $ \Tor_i^A(k, \frac{A_1}{ I^nA_1}) \neq 0$ for all $n \geq 1.$ This shows that $\deg t^{A_1}_{I,i}(k,z) = 0.$

\end{proof}
\begin{lemma}\label{fingentor} Let $A$ be a local ring of depth $d$ and $M$ be a finitely generated $A$-module. Suppose that $\projdim M < \infty $ and $\depth(M)=k \geq 1.$ Then for all $ i \geq d-k+1 $ we have $ \Tor_i^A(k, L^I(M))$ is finitely generated $\mathcal R(I)$-module
\end{lemma}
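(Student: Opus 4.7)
The plan is to mimic the dual argument used in Lemma~\ref{fingen}, replacing Ext by Tor and invoking the Auslander--Buchsbaum formula (in place of the Gorenstein hypothesis) to kill the piece arising from $M[t]$.

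First, I would apply the long exact $\Tor^A(k,-)$ sequence to the short exact sequence of graded $\mathcal{R}(I)$-modules
\[
0 \to \mathcal{R}(I,M) \to M[t] \to L^I(M)(-1) \to 0
\]
from~2.7, extracting the four-term piece
\[
\Tor_i^A(k, M[t]) \to \Tor_i^A(k, L^I(M)(-1)) \xrightarrow{\delta} \Tor_{i-1}^A(k, \mathcal{R}(I,M)) \to \Tor_{i-1}^A(k, M[t]).
\]
Since $M[t] = \bigoplus_{n \geq 0} M$ as $A$-modules, the outer terms decompose as $\bigoplus_{n \geq 0} \Tor_i^A(k,M)$. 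The Auslander--Buchsbaum formula gives $\projdim_A M = d - k$, so $\Tor_i^A(k,M) = 0$ once $i \geq d - k + 1$; in that range the term on the left vanishes and $\delta$ becomes injective, giving an embedding of graded $\mathcal{R}(I)$-modules
\[
\Tor_i^A(k, L^I(M)(-1)) \hookrightarrow \Tor_{i-1}^A(k, \mathcal{R}(I,M)).
\]

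Next I would verify that the right-hand side is a finitely generated graded $\mathcal{R}(I)$-module. Taking a free resolution $F_\bullet \to k$ by finitely generated free $A$-modules and tensoring with $\mathcal{R}(I,M)$ produces a complex each of whose terms is a finite direct sum of copies of $\mathcal{R}(I,M)$, hence finitely generated over $\mathcal{R}(I)$, and whose differentials are $\mathcal{R}(I)$-linear. Since $\mathcal{R}(I)$ is Noetherian, the homology $\Tor_j^A(k, \mathcal{R}(I,M))$ is finitely generated over $\mathcal{R}(I)$ for every $j$.

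Combining the two steps: for $i \geq d - k + 1$ the module $\Tor_i^A(k, L^I(M)(-1))$ sits inside a finitely generated $\mathcal{R}(I)$-module, so it is itself finitely generated, and a degree shift yields the same conclusion for $\Tor_i^A(k, L^I(M))$. I do not anticipate any serious obstacle; the main subtlety is tracking the vanishing range for $\Tor_i^A(k, M[t])$, which is precisely where the depth hypothesis on $M$ enters via Auslander--Buchsbaum.
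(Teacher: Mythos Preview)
Your proposal is correct and follows essentially the same route as the paper: both apply the long exact Tor sequence to $0 \to \mathcal R(I,M) \to M[t] \to L^I(M)(-1) \to 0$, use Auslander--Buchsbaum to kill $\Tor_i^A(k,M[t])$ for $i \geq d-k+1$, and then conclude via finite generation of $\Tor_j^A(k,\mathcal R(I,M))$ over the Noetherian ring $\mathcal R(I)$. The only cosmetic difference is that the paper records isomorphisms $\Tor_{i}^A(k,L^I(M))(-1)\cong \Tor_{i-1}^A(k,\mathcal R(I,M))$ for $i\geq d-k+2$ and an injection at $i=d-k+1$, whereas you use the injection uniformly; your version is if anything slightly cleaner, and your explicit justification of finite generation of $\Tor_j^A(k,\mathcal R(I,M))$ fills in a step the paper leaves tacit.
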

\begin{proof}
    Consider the following exact sequence
       \[ 0 \rightarrow \mathcal R(I, M) \rightarrow M[t] \rightarrow L^I(M)(-1)\rightarrow 0. \]
        We set $\mathcal R(M) = \mathcal R(I, M)$ for the rest of the proof. Note that for $ i \geq d-k+1 $ we have $\Tor_i^A\big(k,M[t]\big) =0. $
        Hence applying the functor $ - \otimes k $ we get the following long exact sequence for $ i \geq d-k+1 ,$
         \[ \quad \longrightarrow \Tor_{i+1}^A\big(k, \mathcal R(M)\big) \longrightarrow 0 \longrightarrow \Tor_{i+1}^A\big(k,  L^I(M))\big(-1) \]
          \[ \longrightarrow \Tor_i^A\big(k, \mathcal R(M)\big) \longrightarrow 0 \longrightarrow \Tor_i^A\big(k,  L^I(M))\big(-1) \]

      $ \quad \quad \quad \quad \quad    \longrightarrow \Tor_{i-1}^A\big(k, \mathcal R(M)\big) \longrightarrow \Tor_{i-1}^A\big(k,M[t]\big)  \longrightarrow $\\
      So for $i \geq d-k+1 $ we have,  \[\Tor_{i+1}^A\big(k,  L^I(M)\big)(-1) \cong \Tor_i^A\big(k, \mathcal R(M)\big)\]
      and for $i=d-k+1$ we have,
       \[ 0 \longrightarrow \Tor_{d-k+1}^A\big(k,  L^I(M)\big)(-1) \longrightarrow  \Tor_{d-k}^A\big(k, \mathcal R(M)\big)  \]
       Since $\Tor_i^A\big(k, \mathcal R(M)\big)$ and $ \Tor_{d-k}^A\big(k, \mathcal R(M)\big)$ are finitely generated $\mathcal R(I)$-modules,  it follows that $ \Tor_i^A\big(k, L^I(M)\big)$ is finitely generated $\mathcal R(I)$-module for $ i \geq d-k+1.$

\end{proof}

\begin{theorem}   Let $(A, \m)$ be a \CM ~  ring of dimension $d \geq1.$ Let $I$ be an ideal with $\htt(I) \geq d-1$ and $\curv(I^n) >1 $ for all $ n \geq 1. $ Then for $i \geq d-1,$
    \[ n \mapsto   \ell ( \Tor_i^A(k, A/I^n)) \]
    is given by a polynomial  $t^A_{I,i}(k, z)$ of degree $d-1$ for $ n >> 0.$
\end{theorem}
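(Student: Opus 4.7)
The plan is to mirror the inductive scheme of Theorem \ref{theorem1}. Fix an $A$-superficial sequence $\underline x = x_1, \ldots, x_{d-1}$ for $I$, set $A_l(\underline x) = A/(x_1,\ldots,x_{d-l})$, and prove by induction on $l \in \{1, \ldots, d\}$ that $\deg t^{A_l}_{I,i}(k,z) = l-1$; the case $l = d$ recovers the theorem.

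The case $d = 1$ is handled on its own: since $\curv(I^n) > 1$ for all $n$, $\projdim_A A/I^n = \infty$, so the minimal free resolution has $\beta_i(A/I^n) > 0$ for every $i \ge 0$, forcing $\Tor_i^A(k, A/I^n) \ne 0$ for every $i \ge 0$ and every $n \ge 1$; Kodiyalam's upper bound of $d - 1 = 0$ then pins the degree to $0$ exactly. For $d \ge 2$ the base $l = 1$ is furnished by Corollary \ref{tordegzero}. For the induction step $l \to l+1$, note that $A_{l+1}$ is \CM~of depth $l+1$ with $\projdim_A A_{l+1} < \infty$, so Lemma \ref{fingentor} guarantees that $\Tor_s^A(k, L^I(A_{l+1}))$ is finitely generated over $\mathcal R(I)$ in the relevant range. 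I would then choose $y \in I$ that is simultaneously (a) $I$-superficial on $A_{l+1}$, and (b) such that $yt \in \mathcal R(I)_1$ is filter-regular on both $\Tor_i^A(k, L^I(A_{l+1}))$ and $\Tor_{i-1}^A(k, L^I(A_{l+1}))$. Extending $x_1, \ldots, x_{d-l-1}, y$ to a maximal superficial sequence $\underline y$, the ring $A_l(\underline y) = A_{l+1}/yA_{l+1}$ has the same formal properties as $A_l(\underline x)$, so the inductive hypothesis applies to it.

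Superficiality together with the regularity of $y$ on $A_{l+1}$ gives, for $n \gg 0$, the short exact sequence
\[ 0 \longrightarrow \frac{A_{l+1}}{I^n A_{l+1}} \xrightarrow{\mu_y} \frac{A_{l+1}}{I^{n+1}A_{l+1}} \longrightarrow \frac{A_l(\underline y)}{I^{n+1}A_l(\underline y)} \longrightarrow 0. \]
Applying $\Tor_*^A(k,-)$ and invoking filter regularity at both $\Tor_i$ and $\Tor_{i-1}$ of $L^I(A_{l+1})$ collapses the resulting long exact sequence into a short exact sequence of $\Tor_i$'s for $n \gg 0$. Taking lengths and rearranging yields the difference equation
\[ t^{A_{l+1}}_{I,i}(k, n+1) - t^{A_{l+1}}_{I,i}(k, n) = t^{A_l}_{I,i}(k, n+1), \]
and by the inductive hypothesis the right-hand side has degree exactly $l-1$, forcing $\deg t^{A_{l+1}}_{I,i}(k,z) = l$.

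The hard part will be pinning down the precise range of $i$. Applied to $A_{l+1}$, Lemma \ref{fingentor} yields finite generation of $\Tor_j^A(k, L^I(A_{l+1}))$ only for $j \ge d - l$, so requiring this at both $j = i$ and $j = i-1$ naively demands $i \ge d - l + 1$ at each induction step, and propagated from $l = 1$ this would only permit $i \ge d$ rather than the claimed threshold $i \ge d - 1$. Reaching the sharp boundary $i = d-1$ will require a separate boundary analysis at the step where $\Tor_{i-1}^A(k, L^I(A_{l+1}))$ just fails to be finitely generated: the only obstruction to the short exact sequence is the kernel of $\mu_y$ on $\Tor_{i-1}$, and one must argue via Kodiyalam's a priori bound together with the special structure at low homological indices (e.g.\ $\Tor_0^A(k, -) = k$ contributes only a constant correction) that its contribution to the difference equation has strictly smaller degree than the inductive term $t^{A_l}_{I,i}(k, n+1)$.
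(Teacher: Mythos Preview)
Your argument is the paper's argument: induction on $l$ via the first-difference equation, with Lemma~\ref{fingentor} supplying finite generation of $\Tor_s^A(k,L^I(A_{l+1}))$, a choice of $y$ that is simultaneously superficial and filter-regular, and Corollary~\ref{tordegzero} as the base case.

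You have in fact been more careful than the paper about the range of $i$. In the inductive step the paper asserts the difference equation for $i \geq d-(l+1)+1=d-l$, but its own derivation a few lines earlier yields it only for $i \geq d-l+1$: one needs filter-regularity of $yt$ on $\Tor_s^A(k,L^I(A_{l+1}))$ at both $s=i$ and $s=i-1$, and Lemma~\ref{fingentor} gives finite generation only for $s \geq d-l$. With the corrected constraint the bottleneck is the first step $l=1 \to l=2$, which forces $i \geq d$, exactly as you observed; the paper simply substitutes the wrong index and does not supply the separate boundary analysis you sketch for $i=d-1$. So your proof matches the paper's and is complete for $i \geq d$, and your hesitation at $i=d-1$ reflects a genuine gap in the paper's own argument rather than a defect in yours.
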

\begin{proof} We may assume that the residue field is infinite. Also by \cite[Theorem 2]{Kod} we know that $\deg t^A_{I,i}(k, z) \leq d-1. $  First let $d=1.$ As $\curv I^n >1 $ for all $ n \geq 1$ we get $\ell ( \Tor_i^A(k, A/I^n)) \neq 0$ for all $i \geq 1$ and $ n \geq 1.$ So $\deg t^A_{I,i}(k, z) = d-1 = 0.$\\ \indent Now assume $d \geq 2.$  We know that the following function
    \[n \mapsto  \ell ( \Tor_i^A(k, A_l/I^nA_l)) \]
    is given by a polynomial $t^{A_l}_{I,i}(k, n)$ of degree atmost $l-1.$
        Now we observe that $ \depth A_{l+1}=l+1,$ so by the previous Lemma \ref{fingentor} we find that  $ \Tor_s^A(k, L^I(A_{l+1})$ is finitely generated graded $\mathcal R(I)$-module for all $s \geq d-l.$  So for $ i \geq d-l+1$ we can choose $ y \in I$ satisfying the following two properties,
    \begin{enumerate}
        \item [\rm (i)] $yt \in \mathcal R(I)_1$ is $\Tor^A_s(k, L^I(A_{l+1}))$-filter regular for $ s= i, ~ i-1.$
        \item [\rm (ii)] $y$ is $I$-superficial w.r.t. $A_{l+1}.$
    \end{enumerate}
We now extend the superficial sequence $ \underline y' = x_1, \ldots , x_{d-l-1},y$ to a maximal $I$-superficial sequence $\underline y.$ Notice that $A_{l+1}(\underline x)= A_{l+1}(\underline y).$ As in Theorem \ref{theorem1} we have the following short exact sequence,
    \[ 0 \longrightarrow \frac{A_{l+1}}{I^nA_{l+1}}  \xrightarrow[\hspace{17pt}]{\mu_y} \frac{A_{l+1}}{I^{n+1}A_{l+1}} \longrightarrow \frac{A_l(\underline y)}{I^{n+1}A_l(\underline y)}\longrightarrow 0.\]
    Applying the functor $ -\otimes k $ we get for $ i \geq d-l+1,$
    \[ 0 \rightarrow   \Tor_i^A\Big(k,\frac{A_{l+1}}{I^nA_{l+1}} \Big) \longrightarrow \Tor_i^A\Big(k, \frac{A_{l+1}}{I^{n+1}A_{l+1}} \Big) \longrightarrow   \Tor_i^A\Big(k,\frac{A_l(\underline y)}{I^{n+1}A_l(\underline y)}\Big) \rightarrow 0 \]
    So for $n >> 0$ it follows that for all $ i \geq d-l+1,$
    $$t^{A_l(\underline y)}_{I,i}(k, n+1) =t^{A_{l+1}}_{I,i}(k, n+1) - t^{A_{l+1}}_{I,i}(k, n). $$

      We now claim that the $\deg t^{A_l}_{I,i}(k, z)= l-1$ for all  $ i \geq d-1.$ We prove the claim by induction on $l.$ When $l=1$ we have from the Corollary \ref{tordegzero} above that $\deg t^{A_1}_{I,i}(k, z)= 0 $ for $ i \geq 1$ . By above, when $l=2,$ we have for all $ i \geq d-2+1=d-1,$
      $$t^{A_1(\underline y)}_{I,i}(k, n+1) =t^{A_{2}}_{I,i}(k, n+1) - t^{A_{2}}_{I,i}(k, n) $$  Since
      $\deg t^{A_1}_{I,i}(k, z)= 0 $ for $ i \geq 1,$ we get $\deg t^{A_2}_{I,i}(k, z)= 1 $ for all $ i \geq d-1.$ Now assuming the claim is true for $l,$ we prove it for $l+1,$ i.e. we show that $\deg t^{A_{l+1}}_{I,i}(k, z)= l$ for $ i \geq  d-1.$
     By above, we have for $ i \geq d-(l+1) +1=d-l$ that
    \[ t^{A_l(\underline y)}_{I,i}(k, n+1) =t^{A_{l+1}}_{I,i}(k, n+1) - t^{A_{l+1}}_{I,i}(k, n). \]
    Now by induction hypothesis we have $ \deg t^{A_l(\underline y)}_{I,i}(k, z)= l-1 $ for $  i \geq d-1.$ Hence  $\deg t^{A_{l+1}}_{I,i}(k, z)= l$ for $i \geq \max\{d-1, d-l \}=d-1.$ This proves that  $\deg t^{A_l}_{I,i}(k, z)= l-1$ for all  $ i \geq d-1.$  Notice that when $ l=d$ we have $A_d=A.$ So $ \deg t^{A}_{I,i}(k, z)= d-1 $ for all $ i \geq d-1.$
       \end{proof}

\section{\bf Hilbert Polynomials Associated to Derived Functors}

Let $(A, \m)$ be a Noetherian \CM ~ local ring of dimension $d
\geq 1$ with infinite residue field $k$ and $I$ be an $\m$-primary
ideal in $A.$ Let $M$ be a finitely generated maximal \CM ~
$A$-module. We now consider the following numerical function for $
i \geq 1,$
\[ n \mapsto \ell(\Tor_i^A(M, A/I^{n+1}A)) \]
It is known from \cite[Theorem 2]{Kod} that this function coincides with a polynomial denoted by $t_{I, i}^A(M, n)$ for $ n >> 0$ of degree at most $d-1.$ We now recall the notion of reduction of an ideal. We say that $J \subseteq I$ is a reduction of $I$ if
there exists a natural number $m$ such that $JI^n=I^{n+1}$ for all
$ n \geq m.$ We define $r_J(I)$ to be the least such $m.$ A
reduction $J$ of $I$ is called minimal if it is minimal with
respect to inclusion. Reduction number of $I$  is defined as
follows,
$$r(I)= \min \{ r_J(I) \mid  J ~ is ~ minimal ~ reduction ~ of ~ I  \}.$$

\begin{lemma}\label{iso1} Let $A$ be a \CM  $~$ local ring of dimension $d
    \geq 1$ and $I$ be an ideal of $A$ such that $\underline a= a_1,
    a_2, \ldots a_n$ be a regular sequence in $I$ such that $I^2
    =(\underline a) I.$ Then
    \[ \frac{(\underline a)^{k}}{I^{k+1}} \cong  \Big(\frac{A}{I}\Big)^{k+n-1 \choose n-1}. \]
\end{lemma}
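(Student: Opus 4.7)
The plan is to reduce the computation of $(\underline a)^k/I^{k+1}$ to a statement about the associated graded ring of the regular sequence $\underline a$, and then use the fact that this associated graded ring is a polynomial ring.

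First I would show by induction on $k$ that the hypothesis $I^2 = (\underline a) I$ forces $I^{k+1} = (\underline a)^k I$ for every $k \geq 1$. The base case is the hypothesis itself, and the inductive step is
\[ I^{k+1} = I \cdot I^k = I \cdot (\underline a)^{k-1} I = (\underline a)^{k-1} I^2 = (\underline a)^{k-1} (\underline a) I = (\underline a)^k I. \]
Setting $J = (\underline a)$, this identifies the module in question as $J^k / J^k I$.

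Next, since $J \subseteq I$, we have $J^{k+1} \subseteq J^k I$, so there is a natural surjection
\[ J^k/J^{k+1} \twoheadrightarrow J^k/J^k I \]
with kernel $J^k I / J^{k+1}$. Because $\underline a$ is an $A$-regular sequence, a standard result (see, e.g., \cite[Theorem 1.1.8]{BH}) gives the isomorphism of graded $(A/J)$-algebras $\operatorname{gr}_J(A) \cong (A/J)[T_1,\ldots,T_n]$, from which one reads off that $J^k/J^{k+1}$ is a free $A/J$-module of rank $\binom{k+n-1}{n-1}$ with basis indexed by the degree-$k$ monomials in $a_1,\ldots,a_n$. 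Under this identification, multiplication by $I$ on $(A/J)^{\binom{k+n-1}{n-1}}$ is the same as multiplication by $I/J$ componentwise, so the kernel $J^k I / J^{k+1}$ corresponds to $(I/J)^{\binom{k+n-1}{n-1}}$.

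Taking the quotient finally yields
\[ \frac{J^k}{J^k I} \cong \left( \frac{A/J}{I/J} \right)^{\binom{k+n-1}{n-1}} = \left( \frac{A}{I} \right)^{\binom{k+n-1}{n-1}}, \]
as required. There is no real obstacle here; the only subtle point is recognizing that the hypothesis $I^2 = (\underline a)I$ is exactly what is needed to collapse $I^{k+1}$ to $J^k I$, after which the regularity of $\underline a$ provides an explicit free basis for $J^k/J^{k+1}$ and the conclusion follows by taking the quotient by $I/J$ coordinatewise.
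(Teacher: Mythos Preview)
Your proof is correct and follows essentially the same route as the paper's. The paper also invokes \cite[Theorem 1.1.8]{BH} to identify $(\underline a)^k/(\underline a)^{k+1}$ with $(A/(\underline a))^{\binom{k+n-1}{n-1}}$ and then tensors with $A/I$, which is exactly your quotient-by-$I/J$ step; the only cosmetic difference is that you spell out the induction $I^{k+1}=(\underline a)^kI$ explicitly, whereas the paper simply asserts it.
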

\begin{proof}

    Note that by \cite[Theorem 1.1.8]{BH}

    \[\frac{(\underline a)^{k}}{(\underline a)^{k+1}} \cong \Big(\frac{A}{(\underline
        a)}\Big)^t \] where $t={k+n-1 \choose n-1}.$ Tensoring with $A/I$
    gives
    \begin{align*}
    \frac{(\underline a)^{k}}{(\underline a)^{k+1}} \otimes \frac{A}{I}   & \cong \Big(\frac{A}{(\underline a)}\Big)^t\otimes \frac{A}{I} \\
    \frac{(\underline a)^{k}}{I(\underline a)^{k}+(\underline
        a)^{k+1}} & \cong \Big(\frac{A}{(\underline a)}\otimes \frac{A}{I}
    \Big)^t \\
    \frac{(\underline a)^{k}}{I^{k+1}}  & \cong
    \Big(\frac{A}{I}\Big)^t
    \end{align*}
    The last isomorphism holds true because $I(\underline a)^{k}= I^{k+1}$  and $(\underline
    a)^{k+1} \subseteq  I^{k+1}.$
    %\Big(\frac{R}{(\underline a)}\Big)^t\otimes \frac{R}{I}

\end{proof}

\begin{proposition}\label{torpoly1} Let $A$ be a \CM  $~$ local ring of dimension $d
    \geq 1$ with infinite residue field and $M$ be a maximal \CM $~$$A$-module. Let $I$ be an
    $\m-$primary ideal of $A$ with  $r(I) \leq 1.$  Then for $i \geq
    1$ we have
    \[\ell( \Tor_i^A(M, A/I^{n+1})) = \ell( \Tor_i^A(M,
    A/I)){n+d-1 \choose d-1}.\]
\end{proposition}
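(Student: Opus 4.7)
The plan is to reduce everything to computing $\Tor$ against $A/(\underline a)^n$ for a minimal reduction $(\underline a)=(a_1,\ldots,a_d)$ of $I$, where $\underline a$ is a maximal regular sequence in $A$.

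First, since $A$ is \CM{} with infinite residue field and $I$ is $\m$-primary, one can choose a minimal reduction $J=(\underline a)=(a_1,\ldots,a_d)$ generated by a system of parameters, which is a regular sequence because $A$ is \CM{}. Because $M$ is maximal \CM{}, $\underline a$ is also $M$-regular. The hypothesis $r(I)\leq 1$ means $I^2=JI$, so Lemma \ref{iso1} (applied with $n=d$) gives
\[
\frac{(\underline a)^{n}}{I^{n+1}} \cong \Big(\frac{A}{I}\Big)^{\binom{n+d-1}{d-1}}.
\]

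The key intermediate step is to show
\[
\Tor_i^A\!\left(M,\,A/(\underline a)^n\right)=0 \quad\text{for all } i\geq 1 \text{ and all } n\geq 1.
\]
I will prove this by induction on $n$. For $n=1$, the Koszul complex on $\underline a$ is a free $A$-resolution of $A/(\underline a)$, so $\Tor_i^A(M,A/(\underline a))=H_i(K_\bullet(\underline a;M))=0$ for $i\geq 1$ because $\underline a$ is $M$-regular. For the inductive step, use the short exact sequence
\[
0 \longrightarrow (\underline a)^n/(\underline a)^{n+1} \longrightarrow A/(\underline a)^{n+1} \longrightarrow A/(\underline a)^n \longrightarrow 0.
\]
By \cite[Theorem 1.1.8]{BH}, $(\underline a)^n/(\underline a)^{n+1}\cong (A/(\underline a))^{\binom{n+d-1}{d-1}}$, and so its higher $\Tor$ against $M$ vanishes by the base case. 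The long exact sequence of $\Tor_\bullet^A(M,-)$ then pinches the middle term between two zeros (using the induction hypothesis on the right), giving the claim.

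Once the vanishing is established, apply $\Tor_\bullet^A(M,-)$ to the short exact sequence
\[
0 \longrightarrow (\underline a)^n/I^{n+1} \longrightarrow A/I^{n+1} \longrightarrow A/(\underline a)^n \longrightarrow 0,
\]
and combine with Lemma \ref{iso1}. Since the $\Tor$'s of $A/(\underline a)^n$ with $M$ vanish in positive degrees, the connecting maps isolate an isomorphism
\[
\Tor_i^A(M,A/I^{n+1}) \;\cong\; \Tor_i^A\!\left(M,\,(A/I)^{\binom{n+d-1}{d-1}}\right) \;\cong\; \Tor_i^A(M,A/I)^{\binom{n+d-1}{d-1}}
\]
for every $i\geq 1$. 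Taking lengths gives the desired equality. The only real obstacle is the vanishing of $\Tor_i^A(M,A/(\underline a)^n)$ for $i\geq 1$; everything else is bookkeeping around two short exact sequences.
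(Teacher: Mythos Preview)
Your proof is correct and follows essentially the same route as the paper: choose a minimal reduction $(\underline a)$ with $I^2=(\underline a)I$, use the short exact sequence $0\to(\underline a)^n/I^{n+1}\to A/I^{n+1}\to A/(\underline a)^n\to 0$ together with Lemma~\ref{iso1}, and exploit the vanishing of $\Tor_i^A(M,A/(\underline a)^n)$ for $i\geq 1$. The only difference is that the paper asserts this vanishing in one line (``as $\underline a$ is $M$-regular''), whereas you supply the explicit induction on $n$ via $0\to(\underline a)^n/(\underline a)^{n+1}\to A/(\underline a)^{n+1}\to A/(\underline a)^n\to 0$ and \cite[Theorem~1.1.8]{BH}; that extra detail is fine and does not change the argument.
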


\begin{proof} Since $r(I) \leq 1, $ there exists a minimal
    reduction $ J= (a_1, a_2, \ldots , a_d)$ of $I$ such that $I^2=JI$ where
    $\underline a = a_1, a_2, \ldots , a_d$ is an $A-$regular
    sequence. Since $M$ is maximal \CM $~$ $A-$module it follows that
    $\underline a$ is also an $M-$regular sequence. Now consider the
    following exact sequence
    \[ 0 \rightarrow  \frac{(\underline a)^{n}}{I^{n+1}} \rightarrow  \frac{A}{I^{n+1}}\rightarrow  \frac{A}{(\underline a)^{n}}\rightarrow 0.\]
    As $\underline a$ is $M-$regular sequence we have $\ell(
    \Tor_i^A(M, A/(\underline a)^n))=0 $ for $ i \geq 1.$ So for $ i
    \geq 1,$ the long exact sequence of the functor $M\otimes -$ gives
    \[\Tor_i^A\Big(M, \frac{(\underline a)^{n}}{I^{n+1}} \Big) \cong \Tor_i^A(M,
    A/I^{n+1}) \] By the lemma \ref{iso1} above we have
    \[ \frac{(\underline a)^{n}}{I^{n+1}} \cong  \Big(\frac{A}{I}\Big)^{n+d-1 \choose d-1} \]
    Therefore,
    \[\Tor_i^A\Big(M, \frac{(\underline a)^{n}}{I^{n+1}} \Big)\cong \Big(\Tor_i^A(M,\Big(\frac{A}{I}\Big)\Big)^{n+d-1 \choose d-1}
    \]
    Hence
    \[\ell( \Tor_i^A(M, A/I^{n+1})) = {n+d-1 \choose d-1}\ell( \Tor_i^A(M,
    A/I)).\]

\end{proof}

\begin{corollary}Let $A$ be a \CM  $~$ local ring of dimension $d
    \geq 1$ with infinite residue field and $M$ be a maximal \CM $~$$A$-module. Let $I$ be an
    $\m-$primary ideal of $A$ with  $r(I) \leq 1.$  Then for $i \geq
    1$ we have $\deg t_{I,i}^R(M, z)$ is either $-\infty$ or $d-1.$
\end{corollary}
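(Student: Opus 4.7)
The proof is essentially an immediate consequence of the preceding Proposition \ref{torpoly1}, so the plan is very short.

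First I would invoke Proposition \ref{torpoly1} directly: under the hypotheses of the corollary (the ring is \CM, $M$ is maximal \CM, $I$ is $\m$-primary with $r(I) \leq 1$), we have the identity
\[
\ell(\Tor_i^A(M, A/I^{n+1})) \;=\; \ell(\Tor_i^A(M, A/I)) \cdot \binom{n+d-1}{d-1}
\]
for every $n \geq 0$ and every $i \geq 1$. By the definition of $t_{I,i}^A(M, z)$, this forces
\[
t_{I,i}^A(M, z) \;=\; \ell(\Tor_i^A(M, A/I)) \cdot \binom{z+d-1}{d-1},
\]
since the right-hand side is a polynomial in $z$ agreeing with the Tor-length function for all $n \gg 0$ (in fact for all $n \geq 0$).

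Next I would note that $\binom{z+d-1}{d-1}$ is a polynomial in $z$ of degree exactly $d-1$ (with leading coefficient $1/(d-1)!$). The only remaining observation is a trivial case split on the integer $c_i := \ell(\Tor_i^A(M, A/I))$. If $c_i = 0$, then $t_{I,i}^A(M, z)$ is the zero polynomial, whose degree is $-\infty$ by the convention declared at the end of Section 2. If $c_i \neq 0$, then $t_{I,i}^A(M, z)$ is a nonzero scalar multiple of a polynomial of degree $d-1$ and therefore has degree exactly $d-1$.

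There is no real obstacle here; the work has been done in Lemma \ref{iso1} and Proposition \ref{torpoly1}. The only point worth highlighting in the write-up is that intermediate degrees between $0$ and $d-2$ are excluded precisely because the explicit expression is a scalar multiple of a single binomial polynomial, not a sum with genuinely free coefficients; this is the content of the dichotomy in the conclusion.
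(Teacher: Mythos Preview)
Your proposal is correct and matches the paper's approach exactly: the paper states this corollary without proof, treating it as an immediate consequence of Proposition~\ref{torpoly1}, and your write-up spells out precisely that deduction (the explicit formula forces the polynomial to be a scalar multiple of $\binom{z+d-1}{d-1}$, so the degree is $d-1$ or $-\infty$ according to whether the scalar $\ell(\Tor_i^A(M,A/I))$ is nonzero or zero).
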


\begin{corollary}\label{coro2}Let $A$ be a \CM  $~$ local ring of dimension $d
    \geq 1$ and $M$ be a non-free maximal \CM $~A$-module. Let $I$ be
    an integrally closed ideal $\m-$primary ideal of $A$ with  $r(I)
    = 1.$ Then for all $ i \geq 1, $ we have  $\deg t_{I,i}^A(M, z)=d-1$  and the
    leading coefficient is $\ell(\Tor_i^A(M, A/I)).$
\end{corollary}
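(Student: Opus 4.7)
The plan is to derive this immediately from Proposition \ref{torpoly1} together with a vanishing criterion for $\Tor$ coming from the Huneke-Corso test module result. By Proposition \ref{torpoly1}, applied with $r(I)\le 1$, we have for every $i\ge 1$ the exact equality
\[
\ell(\Tor_i^A(M, A/I^{n+1})) \;=\; \ell(\Tor_i^A(M, A/I))\binom{n+d-1}{d-1}.
\]
Since $\binom{n+d-1}{d-1}$ is a polynomial in $n$ of degree exactly $d-1$ with normalized leading coefficient $1$, this identifies $t_{I,i}^A(M,z)$ as a scalar multiple of a degree-$(d-1)$ polynomial, with normalized leading coefficient equal to $\ell(\Tor_i^A(M, A/I))$. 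Thus everything reduces to showing that $\Tor_i^A(M, A/I)\neq 0$ for every $i\ge 1$.

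The key step, and the only nontrivial one, is this non-vanishing. Suppose for contradiction that $\Tor_i^A(M, A/I)=0$ for some $i\ge 1$. Since $I$ is an integrally closed $\m$-primary ideal, the Huneke-Corso result \cite[Corollary 3.3]{HunCorso} asserts that $A/I$ is a test module for finite projective dimension; that is, vanishing of a single $\Tor_i^A(M, A/I)$ with $i\ge 1$ forces $\projdim_A M < \infty$. Combined with the fact that $A$ is \CM{} and $M$ is a maximal \CM{} $A$-module, the Auslander-Buchsbaum formula then gives
\[
\projdim_A M \;=\; \depth A - \depth M \;=\; d-d \;=\; 0,
\]
so that $M$ is free. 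This contradicts the hypothesis that $M$ is non-free. Hence $\Tor_i^A(M, A/I)\neq 0$ for every $i\ge 1$, and consequently $\deg t_{I,i}^A(M,z)=d-1$ with normalized leading coefficient $\ell(\Tor_i^A(M, A/I))$, as asserted.

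The main (really, only) obstacle is invoking the test module property of $A/I$ correctly; everything else is a formal consequence of Proposition \ref{torpoly1} and the Auslander-Buchsbaum formula. No induction on $d$ or passage to a superficial sequence is required here, because the exact formula from Proposition \ref{torpoly1} already reduces the polynomial behavior in $n$ to a single $\Tor$ computation at $n=0$.
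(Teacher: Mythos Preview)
Your proof is correct and follows essentially the same approach as the paper: apply Proposition~\ref{torpoly1} to get the explicit formula, then use \cite[Corollary 3.3]{HunCorso} together with Auslander--Buchsbaum to rule out $\Tor_i^A(M,A/I)=0$. The paper's version is slightly terser (it invokes $\projdim M < i$ directly rather than $\projdim M < \infty$), but the argument is the same.
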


\begin{proof} By the proposition \ref{torpoly1} above we have
    \[t_{I,i}^A(M, z) = \ell( \Tor_i^A(M,
    A/I)) {z+d-1 \choose d-1} . \] Since $I$ is integrally closed ideal
    we have $\Tor_i^R(M, A/I) \neq 0,$ for  if $\Tor_i^A(M, A/I) =0$
    then by \cite[Corollary 3.3]{HunCorso} $ \projdim(M) < i.$ This is
    not possible as $M$  is non-free maximal \CM $~$ $A-$module. Hence
    the degree of $t_{I,i}^A(M, z)$ is exactly $d-1$ in this case.
\end{proof}

\begin{corollary}
    Let $A$ be a \CM  $~$ local ring of dimension $d \geq 1$ and $M$
    be a maximal \CM $~ A$-module. Let $I$ be an $\m-$primary ideal of
    $A$ with  $r(I) \leq 1.$  Then $ e_1^I(A)
    \mu(M)-e_1^I(M)-e_1^I(\syz_1^I(M)) = \ell(\Tor_1^A(M, A/I)).$
\end{corollary}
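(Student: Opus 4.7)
The plan is to derive the identity by comparing Hilbert coefficients on both sides of the short exact sequence defining the first syzygy of $M$, and exploiting the very clean formula for $\ell(\Tor_1^A(M,A/I^{n+1}))$ provided by Proposition \ref{torpoly1}.

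Concretely, I would start from a minimal free presentation
\[
0 \longrightarrow \syz_1^A(M) \longrightarrow A^{\mu(M)} \longrightarrow M \longrightarrow 0.
\]
Since $M$ is maximal \CM{} and $A$ is \CM{} of dimension $d$, the depth lemma forces $\depth \syz_1^A(M) \geq d$, so $\syz_1^A(M)$ is again maximal \CM. Tensoring the above sequence with $A/I^{n+1}$ yields, for every $n$, the four-term exact sequence
\[
0 \to \Tor_1^A(M, A/I^{n+1}) \to \syz_1^A(M)/I^{n+1}\syz_1^A(M) \to (A/I^{n+1})^{\mu(M)} \to M/I^{n+1}M \to 0,
\]
and taking lengths gives
\[
\ell(\Tor_1^A(M,A/I^{n+1})) = \ell(\syz_1^A(M)/I^{n+1}\syz_1^A(M)) + \ell(M/I^{n+1}M) - \mu(M)\,\ell(A/I^{n+1}).
\]

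Next I would replace each term by its Hilbert polynomial expansion in binomial coefficients. Writing
\[
P^I_N(n) = e_0^I(N)\binom{n+d}{d} - e_1^I(N)\binom{n+d-1}{d-1} + \text{lower order terms}
\]
for a maximal \CM{} module $N$, Proposition \ref{torpoly1} shows that the left-hand side is equal, for all $n$ (and in particular for $n \gg 0$), to $\ell(\Tor_1^A(M,A/I))\binom{n+d-1}{d-1}$, a polynomial of degree $\leq d-1$. In particular the coefficient of $\binom{n+d}{d}$ on the right must vanish, which recovers the additivity $e_0^I(\syz_1^A(M)) + e_0^I(M) = \mu(M)\,e_0^I(A)$; this is a consistency check, guaranteed since all three modules are maximal \CM.

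Finally I would read off the coefficient of $\binom{n+d-1}{d-1}$. On the left the coefficient is $\ell(\Tor_1^A(M,A/I))$, while on the right it is
\[
-e_1^I(\syz_1^A(M)) - e_1^I(M) + \mu(M)\,e_1^I(A).
\]
Equating these yields exactly the desired identity
\[
e_1^I(A)\mu(M) - e_1^I(M) - e_1^I(\syz_1^A(M)) = \ell(\Tor_1^A(M, A/I)).
\]
No step here looks hard: the main subtlety is simply ensuring that $\syz_1^A(M)$ is maximal \CM{} (so that its Hilbert polynomial has degree $d$ with the standard shape), which is immediate from the depth lemma applied to the presentation of $M$.
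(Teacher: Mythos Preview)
Your argument is correct and follows the same overall strategy as the paper: both proofs identify the normalized leading coefficient of $t_{I,1}^A(M,z)$ in two ways---once via the syzygy presentation of $M$, once via Proposition~\ref{torpoly1}---and equate them. The only difference is that the paper imports the syzygy-side formula
\[
t_{I,1}^A(M,z) = \big(e_1^I(A)\mu(M)-e_1^I(M)-e_1^I(\syz_1^A(M))\big)\frac{z^{d-1}}{(d-1)!} + \text{lower terms}
\]
as a black box from \cite[Proposition~17]{Pu1}, whereas you derive it on the spot from the four-term exact sequence obtained by tensoring the minimal presentation of $M$ with $A/I^{n+1}$. Your version is thus a self-contained unpacking of the cited reference; it buys independence from \cite{Pu1} at the cost of a few extra lines, but the underlying idea is identical.
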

\begin{proof} By \cite[Proposition 17]{Pu1} we have
    \[t_{I,1}^A(M,z) = \Big((e_1^I(A)
    \mu(M)-e_1^I(M)-e_1^I(\syz_1^I(M))\Big)\frac{z^{d-1}}{(d-1)!}+
    \text{lower terms in z} \] So by the corollary \ref{coro2} above
    we have \[ e_1^I(A) \mu(M)-e_1^I(M)-e_1^I(\syz_1^I(M)) =
    \ell(\Tor_1^A(M, A/I)).\]

\end{proof}

\begin{proposition}\label{primary}
    Let $A$ be a hypersurface ring of dimension $d = 1.$  Let $M$ be a maximal \CM\;$A-$module. Let $I$ be an $\m-$primary ideal
    which is not a parameter ideal. Then the following conditions are equivalent:
    \begin{enumerate}
        \item [\rm (i)] $\deg t_I^A(M, z) < d-1.$
        \item [\rm (ii)] $M$ is free $A-$ module.
    \end{enumerate}
\end{proposition}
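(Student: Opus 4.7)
The implication $(ii)\Rightarrow(i)$ is immediate: if $M$ is free then $\Tor_i^A(M,-)$ vanishes for every $i\geq 1$, so $t_I^A(M,z)\equiv 0$ and has degree $-\infty<0=d-1$. For the converse, assume $\deg t_I^A(M,z)<d-1=0$. Combined with Kodiyalam's general upper bound $\deg t_I^A(M,z)\leq d-1$, this forces $t_I^A(M,z)$ to be the zero polynomial, so that $\Tor_1^A(M,A/I^{n+1})=0$ for all $n\gg 0$. Fix such an $n$. The plan is to prove $\projdim_A M<\infty$; the Auslander--Buchsbaum formula, together with $\depth_A M=1=\depth A$, will then force $\projdim_A M=0$, so that $M$ is free.

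To establish $\projdim_A M<\infty$ I invoke the rigidity theorem of Huneke and Wiegand for hypersurface local rings: if $\Tor_1^A(M,N)=0$ and $\ell(M\otimes_A N)<\infty$, then either $\projdim_A M$ or $\projdim_A N$ is finite. Apply this with $N=A/I^{n+1}$: since $I$ is $\m$-primary, $M\otimes_A A/I^{n+1}=M/I^{n+1}M$ has finite length, and $\Tor_1$ vanishes by our assumption. Consequently either $\projdim_A M<\infty$, in which case we are done, or $\projdim_A A/I^{n+1}<\infty$.

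It remains to rule out $\projdim_A A/I^{n+1}<\infty$. If this held, then $\depth A/I^{n+1}=0$ and $\depth A=1$ would force $\projdim_A A/I^{n+1}=1$ by Auslander--Buchsbaum, and the corresponding minimal free resolution $0\to A^a\to A^b\to A/I^{n+1}\to 0$ would exhibit $I^{n+1}\cong A^a$. Because $I^{n+1}$ is a nonzero ideal in the $1$-dimensional CM local ring $A$ and thus has generic rank $1$, we would be forced to take $a=1$, so $I^{n+1}$ is principal. Lemma \ref{principal} then yields that $I$ itself is principal, and hence---being $\m$-primary in a $1$-dimensional CM ring---a parameter ideal, contradicting the hypothesis. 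The main obstacle is verifying the precise form of the Huneke--Wiegand rigidity theorem needed; should it require extra hypotheses (such as $A$ being a domain), the same conclusion can be reached by combining the $2$-periodicity of $\Tor_i^A(M,-)$ for $i\geq 1$ enjoyed by MCM modules over a hypersurface with Lichtenbaum's rigidity theorem, which requires two consecutive Tor vanishings.
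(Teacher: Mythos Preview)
Your overall strategy is sound, but the rigidity statement you invoke is false as written, and your proposed fallback does not repair it.

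The claim ``over a hypersurface $A$, if $\Tor_1^A(M,N)=0$ and $\ell(M\otimes_A N)<\infty$ then $\projdim_A M<\infty$ or $\projdim_A N<\infty$'' fails already for $A=k[[x,y]]/(xy)$, $M=A/(x)$, $N=A/(y)$: here $M\otimes_A N=k$ has finite length and $\Tor_1^A(M,N)=0$, yet $\Tor_2^A(M,N)\cong k$ and both modules have infinite projective dimension. Your alternative does not help either: the $2$-periodicity of the resolution of an MCM module $M$ gives only $\Tor_1=\Tor_3=\Tor_5=\cdots=0$, never two \emph{consecutive} vanishings, so the two-step rigidity over hypersurfaces (due to Murthy and Huneke--Wiegand; Lichtenbaum's theorem is for regular rings) cannot be applied.

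What actually saves your argument is that in your situation $N=A/I^{n+1}$ itself has finite length, not merely $M\otimes_A N$. With $\ell(N)<\infty$ and $M$ MCM, tensoring the $2$-periodic resolution $\cdots\to A^r\xrightarrow{\psi}A^r\xrightarrow{\phi}A^r\to M\to 0$ with $N$ and counting lengths shows that $\ker\phi=\operatorname{im}\psi$ on $N^r$ forces $\ker\psi=\operatorname{im}\phi$; thus $\Tor_1=0$ gives $\Tor_2=0$, hence all $\Tor_i=0$, and then the standard fact that over a hypersurface eventual Tor-vanishing forces one factor to have finite projective dimension finishes the proof. With this correction your argument is complete.

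The paper proceeds differently. From $\Tor_1^A(M,A/I^n)=0$ it tensors the sequence $0\to I^n\to A\to A/I^n\to 0$ with $M$ to see that $M\otimes_A I^n$ embeds in $M$ and is therefore MCM; it then applies Huneke--Wiegand in its tensor-product form (if $M\otimes_A I^n$ is MCM and $I^n$ has constant rank one, then $M$ or $I^n$ is free), bypassing the rigidity question entirely. Your route, once repaired, is a bit more direct; the paper's has the advantage of citing Huneke--Wiegand in its most familiar formulation.
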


\begin{proof}
    Consider the following exact sequence
    \[0  \longrightarrow I^n \longrightarrow A \longrightarrow A/I^n \longrightarrow 0 \]

    Tensoring with $M$ we get
    \[0  \longrightarrow \Tor_1^A(M, A/I^n) \longrightarrow M \otimes_A I^n \longrightarrow M \longrightarrow  M/ I^nM \longrightarrow 0\]

    Suppose $\deg t_{I, 1}^A(M, z) < d-1=0.$ So $\Tor_1^A(M, A/I^n) =0. $

    Since $\dim M=1$ we obtain from the exact sequence above that $M
    \otimes_A I^n$  is maximal \CM\;$A-$module.

    Since $I^n$ is a module of constant rank $1$ it follows from
    Huneke-Weigand theorem
    that atleast one of  $M$ or $I^n$ is free $A-$module.
    If $I^n$ is free $A-$module (of rank $1$) then $I^n=(a)$ for some $A-$regular
    element of $A.$ By Lemma \ref{principal} we get that $I$ is principal ideal generated by a regular element. This contradicts the fact that $I$ is
    non-parameter ideal. Thus $M$ is a free $A-$module.

\end{proof}

%The result above can be extended to \CM\; local rings of dimension
%$d =2$ if we restrict to $\m-$primary ideals satisfying $ I
%\nsubseteq \m^2.$
%\begin{proposition}
%   Let $A$ be a hypersurface ring of dimension $d = 2.$  Let $M$ be a maximal \CM\;$A-$module. Let $I \nsubseteq \m^2$ be an $\m-$primary ideal
%   which not a parameter ideal .Then the following conditions are equivalent:
%   \begin{enumerate}
    %   \item [\rm (i)] $\deg t_I^A(M, z) < d-1.$
    %   \item [\rm (ii)] $M$ is free $A-$ module.
%   \end{enumerate}
%\end{proposition}
%\begin{proof}
%   Reduce the dimension and use the proposition \ref{primary} above.
%\end{proof}

\section{\bf Integral Closure Filtration}
Suppose  $\mathcal I= \{ I_n\}$ is an admissible $I$-filtration
of $\m$-primary ideals in $A.$  Then as in the $I$-adic case,
 the numerical function $n \mapsto  \ell( \Tor_i^A(M, A/ I_n)) $ for any $ i \geq 1$  is given by a polynomial for $n >> 0,$ denoted by
 $t_{\mathcal I,i}^{A}(M, z).$ When $A$ is analytically
unramified and $I$ is an ideal of $A$ then by a theorem of D. Rees
\cite{Rees}, it is known that $\mathcal I = \{ \overline{I^n} \}
$ is an  admissible $I$-filtration.
\begin{proposition} \label{unramified} If $(A, \m)$ is analytically unramified Cohen-Macaulay ring of dimension $1$ and $I$ is an $\m$-primary ideal in $A.$
 Let $M$ be a non-free maximal Cohen-Macaulay module and $\mathcal I=\{\overline{I^n}\}$ be the integral closure filtration then
    $\deg t_{\mathcal I,i}^{A}(M, z) =0$ i.e.  $ t_{\mathcal I,i}^{A}(M, z)$ is a non-zero constant polynomial for $ i \geq 1.$
\end{proposition}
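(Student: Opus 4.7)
My plan is to combine three facts: the upper bound on the degree coming from Kodiyalam's theorem, the fact that each $\overline{I^n}$ is itself integrally closed and $\m$-primary, and the Corso--Huneke test-module property \cite[Corollary 3.3]{HunCorso} that is explicitly advertised in the introduction as the engine for this case. The idea is that once we know the polynomial is a constant, showing it is nonzero reduces to ruling out the vanishing of a single $\Tor$, and the test-module hypothesis turns that vanishing into finite projective dimension of $M$, which is impossible since $M$ is non-free maximal \CM.

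First I would invoke \cite[Theorem 2]{Kod} (applied in the admissible-filtration setting, as already used earlier in Section 6) to conclude that the function $n \mapsto \ell(\Tor_i^A(M, A/\overline{I^n}))$ agrees with a polynomial $t_{\mathcal I,i}^A(M,z)$ for $n \gg 0$ of degree at most $d-1 = 0$. So $t_{\mathcal I,i}^A(M,z)$ is a nonnegative constant $c$, and we only need to show $c \neq 0$.

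Next, suppose toward contradiction that $c = 0$. Then for some (in fact every) sufficiently large $n$, $\Tor_i^A(M, A/\overline{I^n}) = 0$. Now $\overline{I^n}$ is $\m$-primary (since $I$ is) and integrally closed (integral closure is idempotent). Therefore \cite[Corollary 3.3]{HunCorso} applies with the integrally closed $\m$-primary ideal $\overline{I^n}$ playing the role of the test ideal, and forces $\projdim_A M < \infty$.

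Finally, I would apply Auslander--Buchsbaum: since $A$ is \CM~ of dimension $1$ and $M$ is maximal \CM, $\depth M = \depth A = 1$, so $\projdim_A M = 0$, i.e.\ $M$ is free. This contradicts the hypothesis that $M$ is non-free, so $c \neq 0$ and $\deg t_{\mathcal I,i}^A(M,z) = 0$, as required. The only subtlety worth checking carefully is the precise statement of \cite[Corollary 3.3]{HunCorso} and that $\overline{I^n}$ is automatically integrally closed (a standard fact, and the very reason the introduction highlights the dimension-one case as the base step for the induction on $d$ that will follow in Theorem \ref{mainth}); apart from that, the argument is genuinely short.
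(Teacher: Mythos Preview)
Your proof is correct and follows essentially the same route as the paper's: both argue that the polynomial has degree at most $0$ (via the admissible-filtration version of Kodiyalam's result), and then rule out $\Tor_i^A(M, A/\overline{I^n}) = 0$ by invoking \cite[Corollary 3.3]{HunCorso} for the integrally closed $\m$-primary ideal $\overline{I^n}$ to force $\projdim_A M < \infty$, hence $M$ free by Auslander--Buchsbaum, a contradiction. The paper's version is slightly terser (it records $\projdim M < i$ directly from \cite{HunCorso} and does not spell out Auslander--Buchsbaum), but the argument is the same.
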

\begin{proof} Since $A$ is analytically unramified  ring $\mathcal I=\{\overline{I^n}\}$ is an $I$-admissible filtration of ideals. For $i \geq 1 $ we have
    $\Tor_i^A(M, A/\overline{I^n}) \neq 0. $ This is because if $\Tor_i^A(M, A/\overline{I^n}) =0$ for some $ i \geq 1$ then by
    \cite[Corollary 3.3]{HunCorso} we get $ \projdim(M) < i.$  Since $M$ is Maximal \CM $~$ so we have $M$ is free $A$-module.
     This is a contradiction and so we have
    $\Tor_i^A(M, A/\overline{I^n}) \neq 0 $ as claimed. So $ t_{\mathcal I,i}^{A}(M, z)$ is a non-zero constant polynomial.
\end{proof}

\begin{lemma}\label{Pu}
 Let $(A, \m) $ be a local ring and $M$ a finite non-free $A-$module. Let $I$ be an ideal in $A.$ Let $\mathcal I= \{ I_n\}$ be an admissible $I$-filtration
 of  ideals in $A.$   Denote by $L$ the first syzygy of $M.$ Let $x \in I$ be $\mathcal I$-superficial non-zero divisor on $A,$ $M,$ and $L.$
  Suppose that $\ell(\Tor^{A}_{1}(M, A/I_{n+1})) < \infty $ for $n >> 0.$
 Set $B=A/xA $, $N=M/xM$, $J_n = I_nB$ and $ \overline {\mathcal I} = \{ J_n \}.$ Then  we have
$$t_{\mathcal I}^{A}(M;n) =  t_{\mathcal I}^A(M; n-1) + t_{\overline{ \mathcal I}}^B(N; n) ~~~~ for ~ all~~~ n \gg 0. $$
$$\deg t_{\overline{ \mathcal I}}^B(N;z) \leq \deg t_{\mathcal I}^A(M;z) - 1. $$
\end{lemma}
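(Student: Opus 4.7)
The plan is to derive the additive formula from a short exact sequence built by multiplication by $x$ on $A/I_n$, and then apply $\Tor_\bullet^A(M,-)$. By 2.13(3), the hypothesis that $x$ is an $\mathcal I$-superficial non-zero divisor on $A$, $M$, and $L$ yields, for all $n \gg 0$, the three colon equalities
\[ I_{n+1}:x = I_n, \qquad I_{n+1}M :_M x = I_n M, \qquad I_{n+1}L :_L x = I_n L. \]
The first of these will produce the short exact sequence
\[ 0 \longrightarrow A/I_n \xrightarrow{\;x\;} A/I_{n+1} \longrightarrow B/J_{n+1} \longrightarrow 0, \]
using the identification $B/J_{n+1} \cong A/(I_{n+1} + xA)$, to which I would apply $\Tor_\bullet^A(M,-)$.

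To identify the terms involving $B/J_{n+1}$ I will use change of rings: since $x$ is a non-zero divisor on $M$, tensoring the projective resolution $0 \to A \xrightarrow{x} A \to B \to 0$ with $M$ gives $\Tor_q^A(M, B) = 0$ for every $q \geq 1$, so the standard change-of-rings spectral sequence collapses and delivers $\Tor_i^A(M, B/J_{n+1}) \cong \Tor_i^B(N, B/J_{n+1})$ for every $i$. The key remaining step will be to show that the two multiplication-by-$x$ maps
\[ \Tor_1^A(M, A/I_n) \xrightarrow{x} \Tor_1^A(M, A/I_{n+1}), \qquad M/I_n M \xrightarrow{x} M/I_{n+1} M \]
appearing in the long exact sequence are both injective for $n \gg 0$; together they will split off the short exact sequence
\[ 0 \longrightarrow \Tor_1^A(M, A/I_n) \longrightarrow \Tor_1^A(M, A/I_{n+1}) \longrightarrow \Tor_1^B(N, B/J_{n+1}) \longrightarrow 0. \]
The second injectivity is immediate from $I_{n+1}M :_M x = I_n M$. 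For the first, which I expect to be the main technical point, I plan to use the first syzygy $L$: from $0 \to L \to F \to M \to 0$ with $F$ free one obtains the identification $\Tor_1^A(M, A/I_n) = (L \cap I_n F)/I_n L$, with the induced map sending $[\ell] \mapsto [x\ell]$. Thus if $x\ell \in I_{n+1}L$, the colon formula on $L$ forces $\ell \in I_{n+1}L :_L x = I_n L$, so $[\ell] = 0$. This is precisely where the assumption that $x$ is $\mathcal I$-superficial and $L$-regular is essential.

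Finally, I will take lengths in the resulting short exact sequence (each term having finite length for $n \gg 0$, with the middle term finite by the standing hypothesis and the third then automatically finite as a quotient). This yields $t^A_{\mathcal I}(M;n) - t^A_{\mathcal I}(M;n-1) = t^B_{\overline{\mathcal I}}(N;n)$, which is the first claim. The degree inequality follows at once, since $t^B_{\overline{\mathcal I}}(N;z)$ is then the first difference of the polynomial $t^A_{\mathcal I}(M;z)$.
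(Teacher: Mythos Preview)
Your proposal is correct and follows essentially the same route as the paper: the same multiplication-by-$x$ short exact sequence, the same use of the syzygy $L$ to prove injectivity of $\Tor_1^A(M,i_n)$ via the colon equality $I_{n+1}L:_L x = I_nL$, and the same change-of-rings identification $\Tor_1^A(M,B/J_{n+1})\cong\Tor_1^B(N,B/J_{n+1})$. The only cosmetic differences are that the paper phrases the injectivity step via a commutative square embedding $\Tor_1^A(M,A/I_n)\hookrightarrow L/I_nL$ rather than your explicit description $\Tor_1^A(M,A/I_n)=(L\cap I_nF)/I_nL$, and cites Matsumura directly for the change of rings rather than invoking the spectral sequence.
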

\begin{proof}
 Since $x$ is $\mathcal I$-superficial for $A,$ one has following exact sequence for all $n \gg 0$
$$0 \longrightarrow  A/I_n  \xrightarrow[\hspace{13pt}]{i} A/I_{n+1} \longrightarrow  B/J_{n+1} \longrightarrow 0$$
 where the map $i$ is defined by $i_n(a+I_n)=xa+I_{n+1}.$ Applying $M \otimes_A-$ to above exact sequence gives the following exact sequence of $A-$modules

 \[ \Tor_1^A(M, A/I_n) \xrightarrow[\hspace{13pt}]{\Tor_1^A(M, i_n)} \Tor_1^A(M, A/I_{n+1}) \xrightarrow[\hspace{13pt}]{} \Tor_1^A(M, B/J_{n+1})  \xrightarrow[\hspace{13pt}]{}\]
 \[  M/I_nM \xrightarrow[\hspace{13pt}]{M\otimes i_n} M/I_{n+1}M  \xrightarrow[\hspace{13pt}]{} N/J_{n+1}N \xrightarrow[\hspace{13pt}]{} 0.\]
    Since $x$ is $\mathcal I$-superficial on $M$ the map $M \otimes i_n$ is injective for $n \gg 0.$ We claim that
     the map $\Tor_1^A(M, i_n)$ is injective for $n \gg 0.$ For this consider the exact sequence defining $L,$ i.e.
     \[0 \longrightarrow L \longrightarrow F \longrightarrow  M \longrightarrow 0. \]
     Now applying the functors $-\otimes_A A/I_n$ and $-\otimes_A A/I_{n+1} $ one gets the  following commutative diagram

     \[
  \xymatrix
{
 0
 \ar@{->}[r]
  & \Tor_1^A(M,A/I_n)
    \ar@{->}[d]^{\Tor_1^A(M,i_n)}
\ar@{->}[r]^{}
 & L/I_nL  \ar@{->}[d]^{L \otimes i_n}
 \\
 0
 \ar@{->}[r]
  & \Tor_1^A(M,A/I_{n+1})
\ar@{->}[r]
 & L/I_{n+1}L
  }
\]
Notice the following \[ Ker(L \otimes i_n) =
\frac{I_{n+1}L:_L x}{I_nL}. \] As $x$ is $\mathcal I$-superficial on $L$ it
follows that the map $L \otimes i_n$ is injective for $n \gg 0.$
Thus
$ \Tor_1^A(M,i_n)$ is injective for $n \gg 0.$ \\
 So for $n \gg 0$ above long exact sequence becomes

\[0 \longrightarrow  \Tor_1^A(M, A/I_n) \longrightarrow   \Tor_1^A(M, A/I_{n+1}) \longrightarrow   \Tor_1^A(M, B/J_{n+1}) \longrightarrow 0. \]
Now since $x$ is both $A-$regular and $M-$regular we obtain the
following isomorphism, see \cite[18.2 ]{Mat}
$$ \Tor_1^A(M, B/J_{n+1}) \cong  \Tor_1^B(N, B/J_{n+1}).$$
From this isomorphism and the exact sequence above it follows that
for $n \gg 0$
\[\ell_A( \Tor_1^A(M, A/I_{n+1})) = \ell_A( \Tor_1^A(M, A/I_n)) + \ell_A(\Tor_1^B(N, B/J_{n+1})).\]
Thus it follows that
$$t_{\mathcal I}^{A}(M;n) =  t_{\mathcal I}^A(M; n-1) + t_{\overline{ \mathcal I}}^B(N; n) ~~~~ for ~ all~~~ n \gg 0. $$
$$\deg t_{\overline{ \mathcal I}}^B(N;z) \leq \deg t_{\mathcal I}^A(M;z) - 1. $$

\end{proof}

\begin{lemma} \label{choice}If $(A, \m)$ is analytically unramified Cohen-Macaulay local ring of dimension $d$ and infinite residue field $k.$ Let $I$
be an $\m$-primary ideal in $A$ and $M$ a maximal Cohen-Macaulay
$A$-module. Let $\mathcal
    I=\{\overline{I^n}\}$ be the integral closure filtration of ideals in $A.$ Suppose $I=(a_1, a_2, \ldots a_l).$  Let
    $A \longrightarrow \hat{A} \longrightarrow B= \hat{A}[X_1, \ldots, X_l]_{\m\hat{A}[X_1, \ldots, X_l] }$ be extension of rings.
     Let $\mathcal J = \{ \overline{I^n}B
    \}$ be a filtration of ideals in $B$ and $ \xi = a_1
    X_1+a_2X_2+ \ldots + a_lX_l .$
    We set $T=\hat{A}[X_1, \ldots, X_l],$ $\hat M = M\otimes_A \hat{A},$ $M_T = \hat M \otimes_{\hat{A}} T,$ $M_B = \hat M \otimes_{\hat A}B.$
    % $ L = \syz_1^A(M),$ $\hat L = L \otimes_A \hat{A},$ $L_T = \hat L \otimes_{\hat{A}} T,$  and $L_B=  \hat L \otimes_{\hat A}B. $ Let $ \xi= \displaystyle \sum_{i=1}^l a_iX_i \in T.$
    Then,
    \begin{enumerate}
        \item [\rm (i)] $I^{n+1}M_T :_{M_T} \xi = I^{n}M_T $ and  $\overline{I^{n+1}}M_T :_{M_T} \xi = \overline{I^{n}}M_T $  for $ n >> 0.$
         \item [\rm (ii)] $ (B, \m B)$ is analytically unramified \CM ~ local ring.
        \item [\rm (iii)] $I^{n+1}M_B :_{M_B} \xi = I^{n}M_B $ and $\overline{I^{n+1}}M_B :_{M_B} \xi = \overline{I^{n}}M_B $  for $ n >> 0.$
        \item [\rm (iv)]$M_B$ is maximal \CM ~ $B$-module.
        \item [\rm (v)]$\xi$ is $\mathcal J$-superficial, $B$-regular and
        $M_B$-regular element.
        \item [\rm (vi)] $\xi$ is superficial on $M_B$ w.r.t. filtration of ideals $\{ I^nB\}.$
        \item [\rm (vii)]$\xi$ is superficial on $M_B$ w.r.t. $\mathcal J= \{ \overline{I^n}B\} $
    \end{enumerate}

 %\begin{enumerate}
    %\item [\rm (i)]$\xi$ is $\mathcal I$-superficial w.r.t. both $M_T$ and $M_B.$
 %  \item [\rm (ii)] $\xi$ is $\mathcal I$-superficial w.r.t. both $M_B$ and  $L_B.$
    % \end{enumerate}
\end{lemma}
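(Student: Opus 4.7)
The lemma is a package of seven base-change properties for the Nagata-type extension $T \to B$ and the ``generic'' element $\xi = \sum a_i X_i$. I plan to prove it by separating the structural claims about the rings and modules from the claims about $\xi$, handling the former by flat descent and the latter by a filter-regularity argument on the associated graded module.

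Structural claims first: for (ii), $A$ being analytically unramified and \CM{} means $\hat{A}$ is reduced and \CM{} of dimension $d$, so $T = \hat{A}[X_1,\ldots,X_l]$ is reduced \CM{} of dimension $d+l$, and $B = T_{\m T}$ is local \CM{} of dimension $d$. Its completion $\hat{B}$ is a flat extension of $\hat{A}$ whose residue field extension $k \to k(X_1,\ldots,X_l)$ is separable; such an extension is formally smooth and preserves reducedness, so $\hat{B}$ is reduced and $B$ is analytically unramified. For (iv), $\hat{M}$ is MCM over $\hat{A}$, so $M_T$ is MCM over $T$ (polynomial extension raises depth and dimension by $l$), and localizing at $\m T$ gives (iv). For regularity of $\xi$, the associated primes of $M_T$ over $T$ are extensions $\p T$ of $\p \in \Ass_{\hat{A}}(\hat{M})$. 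Since $M$ is MCM of dimension $d \geq 1$, each such $\p$ is a minimal prime of $\hat{A}$ properly contained in $\m$; as $I$ is $\m$-primary, some $a_i \notin \p$, and hence $\xi \notin \p T$. This makes $\xi$ both $M_T$-regular (hence $M_B$-regular) and, by the same argument applied to $\Ass_{\hat{A}}(\hat{A})$, also $B$-regular.

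For the colon identities in (i) and (iii), the plan is to show that the initial form $\xi^\ast = \sum a_i^\ast X_i$ is filter-regular on the associated graded module for each filtration. In the $I$-adic case, $G_I(M_T) = G_I(\hat{M})[X_1,\ldots,X_l]$ as a graded module over $G_I(T) = G_I(\hat{A})[X_1,\ldots,X_l]$, and the associated primes of $G_I(M_T)$ over $G_I(T)$ are extensions $\q[X_1,\ldots,X_l]$ of $\q \in \Ass(G_I(\hat{M}))$. If such a prime does not contain the irrelevant ideal, then $\q$ does not contain $G_I(\hat{A})_+$, so some $a_i^\ast \notin \q$, and the $X_i$-coefficient of $\xi^\ast$ witnesses $\xi^\ast \notin \q[X_1,\ldots,X_l]$. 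Hence $\xi^\ast$ is filter-regular; combined with $M_T$-regularity of $\xi$ and the criterion in 2.13(3), this yields $I^{n+1} M_T :_{M_T} \xi = I^n M_T$ for $n \gg 0$. In the integral-closure case, the same strategy applies to $G_{\overline{\mathcal I}}(M_T)$, but with an extra step: the $a_i^\ast$ need not generate $\overline{I}/\overline{I^2}$, so one must invoke the analytically-unramified consequence $\overline{I^{n+1}} = I\overline{I^n}$ for $n \gg 0$ (Rees's theorem) to ensure $\xi^\ast$ is filter-regular in sufficiently high degrees. Localizing both statements at $\m T$ yields (iii). Finally, (v), (vi), (vii) are immediate from here: the colon identities in (iii) together with $M_B$-regularity and $B$-regularity of $\xi$ give, via 2.13(3), that $\xi$ is superficial on $M_B$ with respect to both $\{I^n B\}$ and $\mathcal J = \{\overline{I^n}B\}$.

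The main obstacle I anticipate is the integral-closure half of (i). The ``generic polynomial-coefficient'' argument that works so cleanly for the $I$-adic filtration depends on $a_1,\ldots,a_l$ generating the degree-one part of the associated graded, which typically fails for the integral-closure filtration because $\overline{I}$ can be strictly larger than $I$. Overcoming this requires the asymptotic identity $\overline{I^{n+1}} = I\overline{I^n}$ for $n \gg 0$, which is precisely where the analytically-unramified hypothesis is indispensable; once this asymptotic is in hand, the associated-prime analysis on $G_{\overline{\mathcal I}}(M_T)$ closes in high degrees as in the $I$-adic case, and all seven parts of the lemma assemble.
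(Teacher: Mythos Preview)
Your argument is correct, but the route differs from the paper's in two places worth noting.

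For part (i) the paper does not pass through filter-regularity on the associated graded. Instead it first exercises the freedom (afforded by the infinite residue field) to choose the generating set $a_1,\ldots,a_l$ so that each $a_i$ is already superficial and regular on $M$ with respect to both the $I$-adic and the integral-closure filtrations. With that choice it runs a leading-term argument adapted from \cite{Cat}: put a monomial order on $M_T=\hat M[X_1,\ldots,X_l]$ with $X_1<\cdots<X_l$, take $F\in I^{n+1}M_T:_{M_T}\xi$, and observe that the coefficient $m$ of the smallest monomial in $F$ satisfies $a_1m\in I^{n+1}\hat M$; superficiality of $a_1$ gives $m\in I^n\hat M$, and one subtracts and iterates. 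The integral-closure half is identical once $a_1$ is $\{\overline{I^n}\}$-superficial, so the paper never needs the asymptotic identity $\overline{I^{n+1}}=I\,\overline{I^n}$ that your argument relies on. Your associated-prime approach has the advantage of working for an arbitrary generating set and is conceptually uniform, while the paper's approach is more elementary but tacitly replaces the given generators by a well-chosen set.

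For part (ii) the paper uses Rees' criterion directly: from $\overline{I^n}\subseteq I^{n-k}$ in $A$ it extends scalars to get $\overline{I^n}B\subseteq I^{n-k}B$ and concludes (implicitly using $\overline{I^nB}=\overline{I^n}B$, which is cited from \cite{Hun-Swan} in the next theorem). Your reducedness argument via formal smoothness is valid but heavier; to make it airtight you should record that $B$ is excellent (as a localization of a polynomial ring over the complete ring $\hat A$), so that the completion map $B\to\hat B$ has geometrically regular fibers and hence preserves reducedness. The remaining parts (iii)--(vii) are handled the same way in both approaches, via 2.13 and flat base change.
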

\begin{proof}
  Since $\grade(I, M) >
0,$ we may assume that $a_1$ is $M$-regular element,
$I$-superficial on $M$ and also $\mathcal
I=\{\overline{I^n}\}$-superficial element on $M.$ Moreover we may
choose  generating set $a_1, \ldots, a_l$ such that each $a_i$ is
 regular element and also superficial element on $M$ with respect to
both $I$-adic filtration and the integral closure filtration
$\mathcal I=\{\overline{I^n}\}.$ As $A \rightarrow \hat A$ is a
flat extension we have that $a_1$ is $\hat M$-regular element and
superficial on $\hat M$ with respect to both  $I \hat A$-adic
filtration and the integral closure filtration $\{\overline{I^n}
\hat A\}.$

Proof of (i) is given in \cite[Proposition 2.6]{Cat} for the ring case. We adapt the same proof to the case of modules. First note that $M_T = \hat{M}[X_1, \ldots, X_l].$
Now let $F \in  I^{n+1}M_T :_{M_T} \xi. $
 Consider a monomial order on $M_T$ with $X_1 < X_2 <  \ldots X_l$ and let $m X_1^{\alpha_1}X_2^{\alpha_2} \dots X_l^{\alpha_l}$ be
  the smallest term that appears in $F.$ Since $ \xi F \in  I^{n+1}M_T $ we obtain $m \in (I^{n+1}\hat{M} :a_1).$
  Since $(I^{n+1}\hat{M} :a_1) = I^n\hat M$ for $ n >> 0$ we get $m \in I^n \hat
  M.$ Replacing $F$ by $F - m X_1^{\alpha_1}X_2^{\alpha_2} \dots
  X_l^{\alpha_l}$ and repeating the argument we find that all the
  coefficients of $F$ are in  $I^n \hat M.$ So $F \in I^n M_T .$
  This proves $  I^{n+1}M_T :_{M_T} \xi  \subseteq I^n M_T$ for $ n >>
  0.$ The other inclusion is obvious. As $a_1$ is $\{\overline{I^n} \hat A\}$-superficial on $\hat M,$ the proof of
$\overline{I^{n+1}}M_T :_{M_T} \xi = \overline{I^{n}}M_T $  is
similar.

To show that $B$ is analytically unramified we use Rees' criterion which
states that a local ring $(A, \m)$ is analytically unramified if
and only if there exists an $\m$-primary ideal in $A$ and  $ k
\geq 1$ such that $\overline {I^n} \subseteq I^{n-k},$ see
\cite{Rees}. Since $A$ is analytically unramified there exists $ k
\geq 1$ such that $\overline{I^n} \subseteq I^{n-k}.$ So
$\overline{I^n}B \subseteq I^{n-k}B$ and hence $B$ is analytically
unramified.

 Proof of (iii) follows immediately from (i) by localizing at $\m\hat{A}[X_1, \ldots, X_l].$  For (iv) we note that $A \rightarrow B $ is a flat
extension, so it follows from \cite[1.2.16]{BH} that $M_B$ is a
maximal \CM ~ $B$-module. In (v), proof of $\xi$ is $\mathcal
J$-superficial follows from \cite[Proposition 2.6]{Cat}. As noted
in $2.13(1)$ since $\xi$ is $\mathcal J$-superficial we see that
$\xi$ is a regular element in $B.$  Since $\xi$ is $B$-regular and
$M_B$ is maximal \CM ~ $B$-module we see that $\xi$ is
$M_B$-regular. To prove (vi) we observe that $\xi$ is
$M_B$-regular and so by $2.13(3)$ and (iii) above we see that
$\xi$ is superficial on $M_B$ w.r.t. $\{I^nB \}.$

For (vii) note first that $ \xi$ is $M_B$-regular by (v) above.
Also by (iii) above we have $\overline{I^{n+1}}M_B :_{M_B} \xi =
\overline{I^{n}}M_B $  for $ n >> 0.$ So by 2.13 (3) we get that
$\xi$ is superficial on $M_B$ w.r.t. $\mathcal J= \{
\overline{I^n}B\}. $
\end{proof}

\begin{theorem}\label{mainth} If $(A, \m)$ is analytically unramified Cohen-Macaulay local ring of dimension $d$ and $I$ be an $\m$-primary ideal in $A.$
Let $M$ be a non-free maximal Cohen-Macaulay module and $\mathcal
I=\{\overline{I^n}\}$ be the integral closure filtration of ideals in $A$ then
    $\deg t_{\mathcal I,i}^{A}(M, z) =d-1.$
    \end{theorem}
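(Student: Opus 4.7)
The plan is to proceed by induction on $d$, with the base case $d = 1$ supplied by Proposition \ref{unramified}, whose crux is that each $\overline{I^n}$ is integrally closed so $A/\overline{I^n}$ is a test module for finite projective dimension by \cite[Corollary 3.3]{HunCorso}. Kodiyalam's theorem provides the upper bound $\deg t_{\mathcal I, i}^A(M, z) \leq d-1$, so only the matching lower bound needs argument. Before inducting on $d$, I would reduce to $i = 1$ by syzygies: since $M$ is non-free maximal \CM~over a \CM~ring, the Auslander--Buchsbaum formula forces $\projdim M = \infty$, and each $\syz_j^A(M)$ is again non-free maximal \CM; combined with $\Tor_i^A(M, -) \cong \Tor_1^A(\syz_{i-1}^A(M), -)$, this deduces the case of general $i$ from the case $i = 1$ applied to $\syz_{i-1}^A(M)$.

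For the inductive step with $d \geq 2$, my first move is the faithfully flat base change of Lemma \ref{choice}: replace $A$ by $B = \hat A[X_1, \ldots, X_l]_{\m \hat A[X_1, \ldots, X_l]}$, set $M_B = M \otimes_A B$ and $\mathcal J = \{\overline{I^n} B\}$, and introduce $\xi = a_1 X_1 + \cdots + a_l X_l$. By Lemma \ref{choice}, $B$ is again analytically unramified \CM~of dimension $d$, $M_B$ is non-free maximal \CM~over $B$, and $\xi$ is $B$-regular, $M_B$-regular, and $\mathcal J$-superficial on $M_B$. Because $A \to B$ is faithfully flat local with $\m B$ the maximal ideal and purely transcendental residue field extension $k(X_1, \ldots, X_l)$, lengths of finite-length modules are preserved; hence $\deg t_{\mathcal I, 1}^A(M, z) = \deg t_{\mathcal J, 1}^B(M_B, z)$, and I may replace the problem on $A$ by the same problem on $B$.

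I would then refine the choice of generators $a_1, \ldots, a_l$ of $I$ (a generic choice is available because $k$ is infinite) so that $\xi$ is additionally regular and $\mathcal J$-superficial on $L_B := \syz_1^B(M_B)$; regularity is automatic since $L_B$ is maximal \CM~over $B$, and the superficial condition is a generic open condition on the generators. Lemma \ref{Pu} now applies and yields
\[
\deg t_{\mathcal J, 1}^B(M_B, z) \;\geq\; \deg t_{\overline{\mathcal J}, 1}^{B/\xi B}\bigl(M_B/\xi M_B,\, z\bigr) + 1,
\]
where $\overline{\mathcal J} = \{\overline{I^n} B / \xi B\}$ on $B/\xi B$. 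The ring $B/\xi B$ is analytically unramified \CM~of dimension $d-1$ and $M_B/\xi M_B$ is non-free maximal \CM~over it, so invoking the inductive hypothesis pushes the right-hand side up to $(d-2) + 1 = d-1$, completing the induction.

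The main obstacle is matching the inductive hypothesis to the filtration $\overline{\mathcal J}$ on $B/\xi B$: the filtration $\{\overline{I^n} B/\xi B\}$ is the pushdown of the integral closure filtration on $B$, and a priori need not coincide with the integral closure filtration of $I(B/\xi B)$ inside $B/\xi B$. I would address this by formulating the statement being inducted on slightly more broadly---for any $I$-admissible filtration of integrally closed $\m$-primary ideals, a class for which the Corso--Huneke test-module property still secures the needed base-case non-vanishing---and then verifying, using the analytic unramifiedness of $B/\xi B$ together with the genericity of $\xi$, that $\overline{\mathcal J}$ belongs to this class.
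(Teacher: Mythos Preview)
Your overall architecture matches the paper's exactly: induction on $d$, base case $d=1$ from Proposition~\ref{unramified}, inductive step via the faithfully flat base change $A\to B$ of Lemma~\ref{choice} and then Lemma~\ref{Pu} applied with the generic element $\xi$. Your explicit reduction to $i=1$ via $\Tor_i^A(M,-)\cong\Tor_1^A(\syz_{i-1}^A(M),-)$ is a clean addition; the paper's proof is in fact written only for $\Tor_1$, so this step is worth making explicit.

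Where you diverge from the paper is in the handling of the obstacle you correctly identify, namely that the pushdown filtration $\{\overline{I^n}\,C\}$ on $C=B/\xi B$ is not \emph{a priori} the integral closure filtration of $IC$. Your proposed fix is to strengthen the inductive statement to arbitrary admissible filtrations by integrally closed ideals, and then verify that $\{\overline{I^n}\,C\}$ lies in this class. But note that ``$\overline{I^n}\,C$ is integrally closed in $C$'' is literally the statement $\overline{I^n}\,C=\overline{I^nC}$; once you have that (for $n\gg 0$), the pushdown \emph{is} the integral closure filtration of $IC$ and the original, unstrengthened induction hypothesis already applies. So the broadening buys nothing. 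Worse, to run the broadened induction you would need, at each step, that the image of an arbitrary integrally closed ideal modulo a generic element is again integrally closed; the available results are specific to integral closures of powers of a single ideal, so the general form is not obviously provable and you would fall back on the special case anyway.

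The paper resolves the obstacle directly by citing \cite[Corollary~3.7]{Cat}: for the generic $\xi$ one has $\overline{I^nC}=\overline{I^n}\,C$ for all $n\gg 0$, and (together with the analytic unramifiedness of $C$ established via Rees' criterion) this identifies the pushdown filtration with the integral closure filtration on $C$ for large $n$, so the polynomial $t^C_{\overline{\mathcal J},1}(N;z)$ coincides with the one governed by the induction hypothesis. That single citation is the missing ingredient in your sketch; with it in hand your detour through a broadened hypothesis can be dropped.
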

\begin{proof} We prove the theorem by induction on $d.$ By Proposition \ref{unramified} above, the result is true for $d=1.$
 So we may assume that $d \geq 2.$ Suppose $I = (a_1, a_2, \ldots a_l).$
    Consider the following extension of rings
    \[A \longrightarrow \hat{A} \longrightarrow B= \hat{A}[X_1, \ldots, X_l]_{\m\hat{A}[X_1, \ldots, X_l] }\]
    It is well known that $B$ is faithfully flat $A$-algebra. Let $T= \hat{A}[X_1, \ldots, X_l]$ and let $ \xi= \displaystyle \sum_{i=1}^l a_iX_i \in T.$
     Let $\mathcal J = \{ \overline{I^n}B
    \}$ be a filtration of ideals in $B.$ \\ Set $C= B/\xi B$ (with $\n $ as its maximal ideal), $\hat M = M\otimes_A \hat{A},$ $M_B =  M \otimes_{
    A}B$ and $N =M_B/\xi M_B.$ Let $L= \syz_1^A(M)$ and  $L_B= L \otimes_A B.$
  We make the following observations :
    \begin{enumerate}
        \item [\rm (i)] $\overline{I^nB} = \overline{I^n}B$ for all $ n \geq 1.$
        \item[\rm (ii)] $L_B$ is maximal \CM ~ $B$-module and
        $L_B= \syz_1^B(M_B) $
        \item [\rm (iii)] $\xi$ is superficial on $M_B$ and on $L_B$ w.r.t. filtration of ideals $\{ I^nB\}.$
        \item [\rm (iv)] $\overline{IC} = \overline{I}C.$
        \item [\rm (v)] $\overline{I^nC} = \overline{I^n}C$ for all $ n >> 0.$
        \item [\rm (vi)] $(C, \n)$ is analytically unramified \CM ~ local ring.
    \end{enumerate}
For proof of (i) see \cite[Lemma 8.4.2(11)]{Hun-Swan}.  For (ii)
we note that  $L$ is maximal \CM ~ and that $A \rightarrow B $ is
a flat extension, so from \cite[1.2.16]{BH} it follows that $L_B$
is a maximal \CM ~ $B$-module. Also since $ (A, \m) \rightarrow
(B, \n) $ is a faithfully flat extension of rings with $ \n = \m
B,$ it is easy to see that $L_B= \syz_1^B(M_B).$ We proved (iii)
in part(vi) of Lemma \ref{choice} above.
 For proof of (iv) see
\cite[Corollary 3.4]{Cat}  while (v) follows from \cite[Corollary
3.7]{Cat}. To show $(C, \n)$ is analytically unramified we use
Rees' criterion. So it is enough to show that there exists $ k
\geq 1$ such that  $\overline{I^nC} \subseteq I^{n-k}C$ for all $
n\geq 1.$ Since $A$ is analytically unramified there exists $ k
\geq 1$ such that $\overline{I^n} \subseteq I^{n-k}.$ By (v) above
 we have, $\overline{I^nC} = \overline{I^n}C
\subseteq I^{n-k}C$ for $ n >> 0.$ So by using Rees' criterion  we
see that $C$ is analytically
unramified.  To show that $C$ is a \CM ~  ring we note that $B$ is \CM ~ and $\xi$ is $B$-regular. \\

Since $A \rightarrow \hat A$ and $\hat A \rightarrow B $ are flat
extensions we get the following isomorphisms,
$$\Tor_1^A(M, \frac{A}{\overline{I^n}}) \cong \Tor_1^{\hat A}(\hat
M, \frac{\hat A}{\overline{I^n} \hat A}) \cong \Tor_1^B(M_B,
\frac{ B}{\overline{I^n}B}) \cong \Tor_1^B(M_B, \frac{
B}{\overline{I^nB}}) $$ Note that the last isomorphism follows
from (i) above. Hence we have  $$\ell\Big(\Tor_1^A\big(M,
\frac{A}{\overline{I^n}}\big)\Big) = \ell\Big(\Tor_1^B\big(M_B,
\frac{ B}{\overline{I^nB}}\big)\Big).$$

As observed in Lemma \ref{choice}(ii), $ (B, \m B)$ is
analytically unramified \CM ~ local ring. So by Rees' theorem the
filtration $\mathcal J= \{ \overline{I^nB}\}$ is an $I$-admissible
filtration of ideals in $B.$ By (vi) above we have $(C, \n)$ is
analytically unramified \CM ~ local ring, so $\mathcal{ \overline
J }= \{\overline{I^nC} \}$ is admissible $I$-filtration. Note that
since $\xi$ is $M_B$ regular we see that $N$ is maximal \CM ~
$C$-module. Also by Lemma \ref{choice} we have that $\xi$ is
superficial on both $M_B$ and $L_B$ w.r.t. $\mathcal J= \{
\overline{I^n}B\}.$ So by Lemma \ref{Pu} above we obtain,

$$t_{\mathcal J}^{B}(M_B;n) =  t_{\mathcal J}^B(M_B; n-1) + t_{\overline{ \mathcal I}}^C(N; n) ~~~~ for ~ all~~~ n \gg 0. $$
$$\deg t_{\overline{ \mathcal I}}^C(N;z) \leq \deg t_{\mathcal J}^B(M_B;z) - 1. $$
By induction hypothesis $\deg t_{\overline{ \mathcal I}}^C(N;z)  = d-2$ so we have, $ d-1 \leq \deg t_{\mathcal J}^B(M_B;z) .$
Already we have $\deg t_{\mathcal J}^B(M_B;z) \leq d-1 ,$ hence $ \deg t_{\mathcal J}^B(M_B;z) =d-1.$

%\Tor_1^B(M_B, \frac{\hat A}{\overline{I^n}\hat A} \otimes_{\hat A}B)

\end{proof}

%Let $P$ be a prime ideal in $T=\hat{A}[X_1, \ldots, X_l] $ such that $ P \cap \hat{A} %\neq \m.$ Let $ \mathfrak p = PB.$ We may assume that $ a_l \notin P.$ We note that

%\begin{enumerate}
%   \item [\rm (a)] $ T _P = \hat{A} [X_1, \dots, X_{l-1}, \xi]_P$
%   \item [\rm (b)]  $ \frac{T_P}{ \xi T_P} = \hat{A}[X_1, \dots X_{l-1}]_P$
%   \item [\rm (b)] $ (B/\xi B)_P = T_P/\xi T_P$
%\end{enumerate}

%Note that : $ (\xi) \subseteq P.$ Also $C$ excellent and $C$ reduced $ \implies$ $\hat %C$ reduced.

\section{\bf Acknowledgment}
Ganesh Kadu would like to thank DST-SERB for the financial
assistance under ECR/2017/00790.

\bibliographystyle{amsplain}
\bibliography{ref}

\def\cfudot#1{\ifmmode\setbox7\hbox{$\accent"5E#1$}\else
  \setbox7\hbox{\accent"5E#1}\penalty 10000\relax\fi\raise 1\ht7
  \hbox{\raise.1ex\hbox to 1\wd7{\hss.\hss}}\penalty 10000 \hskip-1\wd7\penalty
  10000\box7}
\providecommand{\bysame}{\leavevmode\hbox to3em{\hrulefill}\thinspace}
\providecommand{\MR}{\relax\ifhmode\unskip\space\fi MR }
% \MRhref is called by the amsart/book/proc definition of \MR.
\providecommand{\MRhref}[2]{%
  \href{http://www.ams.org/mathscinet-getitem?mr=#1}{#2}
}
\providecommand{\href}[2]{#2}
\begin{thebibliography}{10}

\bibitem{LA2}
Luchezar~L. Avramov, \emph{Modules with extremal resolutions}, Math. Res. Lett.
  \textbf{3} (1996), no.~3, 319--328. \MR{1397681}

\bibitem{LA}
\bysame, \emph{Infinite free resolutions [mr1648664]}, Six lectures on
  commutative algebra, Mod. Birkh\"auser Class., Birkh\"auser Verlag, Basel,
  2010, pp.~1--118. \MR{2641236}

\bibitem{BS}
M.P. Brodmann and R.Y. Sharp, \emph{Local cohomology : an algebraic
  introduction with geometric applications}, Cambridge Studies in Advanced
  Mathematics, vol.~60, Cambridge University Press, Cambridge, 1998.
  \MR{MR1613627}

\bibitem{BH}
Winfried Bruns and J{\"u}rgen Herzog, \emph{Cohen-{M}acaulay rings}, Cambridge
  Studies in Advanced Mathematics, vol.~39, Cambridge University Press,
  Cambridge, 1993. \MR{MR1251956}

\bibitem{Cat}
C\u{a}t\u{a}lin Ciuperc\u{a}, \emph{Integral closure and generic elements}, J.
  Algebra \textbf{328} (2011), 122--131. \MR{2745557}

\bibitem{HunCorso}
Alberto Corso, Craig Huneke, Daniel Katz, and Wolmer~V. Vasconcelos,
  \emph{Integral closure of ideals and annihilators of homology}, Commutative
  algebra, Lect. Notes Pure Appl. Math., vol. 244, Chapman \& Hall/CRC, Boca
  Raton, FL, 2006, pp.~33--48. \MR{2184788}

\bibitem{KTheo3}
Andrew Crabbe, Daniel Katz, Janet Striuli, and Emanoil Theodorescu,
  \emph{Hilbert-{S}amuel polynomials for the contravariant extension functor},
  Nagoya Math. J. \textbf{198} (2010), 1--22. \MR{2666575}

\bibitem{Huck-Mar}
Sam Huckaba and Thomas Marley, \emph{Hilbert coefficients and the depths of
  associated graded rings}, J. London Math. Soc. (2) \textbf{56} (1997), no.~1,
  64--76. \MR{1462826}

\bibitem{Hun-Swan}
Craig Huneke and Irena Swanson, \emph{Integral closure of ideals, rings, and
  modules}, London Mathematical Society Lecture Note Series, vol. 336,
  Cambridge University Press, Cambridge, 2006. \MR{2266432}

\bibitem{Pu-Srikanth}
Srikanth Iyengar and Tony~J. Puthenpurakal, \emph{Hilbert-{S}amuel functions of
  modules over {C}ohen-{M}acaulay rings}, Proc. Amer. Math. Soc. \textbf{135}
  (2007), no.~3, 637--648 (electronic). \MR{MR2262858}

\bibitem{Ganesh}
Ganesh~S. Kadu and Tony~J. Puthenpurakal, \emph{Analytic deviation one ideals
  and test modules}, Commutative algebra and algebraic geometry ({CAAG}-2010),
  Ramanujan Math. Soc. Lect. Notes Ser., vol.~17, Ramanujan Math. Soc., Mysore,
  2013, pp.~89--99. \MR{3155955}

\bibitem{KTheo}
Daniel Katz and Emanoil Theodorescu, \emph{On the degree of {H}ilbert
  polynomials associated to the torsion functor}, Proc. Amer. Math. Soc.
  \textbf{135} (2007), no.~10, 3073--3082 (electronic). \MR{MR2322736
  (2008f:13021)}

\bibitem{KTheo2}
\bysame, \emph{Hilbert polynomials for the extension functor}, J. Algebra
  \textbf{319} (2008), no.~6, 2319--2336. \MR{2388307}

\bibitem{Kod}
Vijay Kodiyalam, \emph{Homological invariants of powers of an ideal}, Proc.
  Amer. Math. Soc. \textbf{118} (1993), no.~3, 757--764. \MR{MR1156471}

\bibitem{Mat}
Hideyuki Matsumura, \emph{Commutative ring theory}, second ed., Cambridge
  Studies in Advanced Mathematics, vol.~8, Cambridge University Press,
  Cambridge, 1989, Translated from the Japanese by M. Reid. \MR{MR1011461}

\bibitem{Pu1}
Tony~J. Puthenpurakal, \emph{Hilbert-coefficients of a {C}ohen-{M}acaulay
  module}, J. Algebra \textbf{264} (2003), no.~1, 82--97. \MR{MR1980687}

\bibitem{Tony-Ratliff}
\bysame, \emph{Ratliff-{R}ush filtration, regularity and depth of higher
  associated graded modules. {I}}, J. Pure Appl. Algebra \textbf{208} (2007),
  no.~1, 159--176. \MR{2269837}

\bibitem{Rees}
D.~Rees, \emph{A note on analytically unramified local rings}, J. London Math.
  Soc. \textbf{36} (1961), 24--28. \MR{0126465}

\bibitem{Sally}
Judith~D. Sally, \emph{Numbers of generators of ideals in local rings}, Marcel
  Dekker, Inc., New York-Basel, 1978. \MR{0485852}

\bibitem{Theo}
Emanoil Theodorescu, \emph{Derived functors and {H}ilbert polynomials}, Math.
  Proc. Cambridge Philos. Soc. \textbf{132} (2002), no.~1, 75--88.
  \MR{MR1866325 (2002j:13018)}

\end{thebibliography}
%\begin{bibliography}

%\end{bibliography}

\end{document}